\def\Dj{\hbox{D\kern-.73em\raise.30ex\hbox{-}
\raise-.30ex\hbox{}}}
\def\dj{\hbox{d\kern-.33em\raise.80ex\hbox{-}
\raise-.80ex\hbox{\kern-.40em}}}
\newtheorem{theorem}{Theorem}[section]
\newtheorem{lemma}[theorem]{Lemma}
\newtheorem{conjecture}[theorem]{Conjecture}
\newtheorem{problem}[theorem]{Problem}
\begin{document}

\baselineskip=0.30in
\begin{center}
{\Large \bf On the spectral characterization of Kite graphs}

\vspace{6mm}

{\large \bf Sezer Sorgun $^a$\,,} {\large \bf Hatice Topcu
}$^{a}$\

\vspace{9mm}

\baselineskip=0.20in

 $^a${\it Department of Mathematics,\\
 Nevþehir Hac{\i} Bekta\c{s} Veli University, \\
 Nevþehir 50300, Turkey.\/} \\
 e-mail: {\tt srgnrzs@gmail.com, haticekamittopcu@gmail.com}\\[3mm]

\vspace{6mm}

(Received June 4, 2015)

\end{center}

\vspace{6mm}

\baselineskip=0.23in

\begin{abstract}
 The \textit{Kite graph}, denoted by $Kite_{p,q}$  is obtained by appending a complete graph $K_{p}$ to a pendant vertex of a path $P_{q}$. In this paper, firstly we show that no two non-isomorphic kite graphs are cospectral w.r.t adjacency matrix. Let $G$ be a graph which is cospectral with $Kite_{p,q}$  and the clique number of $G$ is denoted by $w(G)$. Then, it is shown that $w(G)\geq p-2q+1$. Also, we prove that $Kite_{p,2}$  graphs are determined by their adjacency spectrum.

\bigskip

 \noindent
 {\bf Key Words:} Kite graph, cospectral graphs, clique number, determined by adjacency spectrum\\
 \\
 {\bf 2010 Mathematics Subject Classification:} 05C50, 05C75

 \end{abstract}

 \vspace{5mm}

 \baselineskip=0.30in

\section{Introduction}

All of the graphs considered here are simple and undirected. Let $G=(V,E)$ be a graph with vertex set $V(G)=\{v_{1},v_{2},\ldots,v_{n}\}$ and edge set $E(G)$.For a given graph $F$, if $G$ does not contain $F$ as a subgraph, then $G$ is called $F-free$. A complete subgraph of$G$ is called a \textit{clique of G}. \textit{The clique number of G} is the number of vertices in the largest clique of $G$ and it is denoted by $w(G)$. Let $A(G)$ be the \textit{(0,1)-adjacency matrix of G} and $d_{k}$ the degree of the vertex $v_{k}$. The polynomial $P_{G}(\lambda)=det(\lambda I-A(G))$ is the \textit{characteristic polynomial of G} where $I$ is the identity matrix. Eigenvalues of the matrix $A(G)$ are called \textit{adjacency eigenvalues}. Since $A(G)$ is real and symmetric matrix, adjacency eigenvalues are all real numbers and will be ordered as $\lambda_{1}\geq\lambda_{2}\geq\ldots\geq\lambda_{n}$. \textit{Adjacency spectrum of the graph G} consists the adjacency eigenvalues with their multiplicities. The largest eigenvalue of a graph is known as its \textit{spectral radius}.

Two graphs $G$ and $H$ are said to be \textit{cospectral} if they have same spectrum (i.e. same characteristic polynomial). A graph $G$ is \textit{determined by adjacency spectrum}, shortly \textit{DAS}, if every graph cospectral with $G$ is isomorphic to $G$. It has been conjectured by the first author in \cite{6} that almost all graphs are determined by their spectrum, \textit{DS} for short. But it is difficult to show that a given graph is \textit{DS}. Up to now, only few graphs are proved to be \textit{DS} \cite{2,3,4,5,6,7,9,10,11,12,13,15}. Recently, some papers have been appeared that focus on some special graphs (oftenly under some conditions) and prove that these special graphs are \textit{DS} or \textit{non-DS} \cite{2,3,4,7,9,10,11,12,13,15}. For a recent widely survey, one can see \cite{6}.

The \textit{Kite graph}, denoted by $Kite_{p,q}$, is obtained by appending a complete graph with $p$ vertices $K_{p}$ to a pendant vertex of a path graph with $q$ vertices $P_{q}$. If $q=1$, it is called \textit{short kite graph}.

In this paper, firstly we obtain the characteristic polynomial of kite graphs and show that no two non-isomorphic kite graphs are cospectral w.r.t adjacency matrix. Then for a given graph $G$ which is cospectral with $Kite_{p,q}$, the clique number of $G$ is $w(G)\geq p-2q+1$. Also we prove that $Kite_{p,2}$ graphs are \textit{DAS}  for all $p$.

\section{Preliminaries}

First, we give some lemmas that will be used in the next sections of this paper.

\begin{lemma} \cite{3} Let $x_{1}$ be a pendant vertex of a graph $G$ and $x_{2}$ be the vertex which is adjacent to $x_{1}$. Let $G_{1}$ be the induced subgraph obtained from $G$ by deleting the vertex $x_{1}$. If $x_{1}$ and $x_{2}$ are deleted, the induced subgraph $G_{2}$ is obtained. Then,

\[P_{A(G)}(\lambda )=\lambda P_{A(G_{1})}(\lambda )-P_{A(G_{2})}(\lambda )\]

\end{lemma}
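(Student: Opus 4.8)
The plan is to prove the identity by a double cofactor expansion of the characteristic determinant along the row and column indexed by the pendant vertex. First I would relabel the vertices of $G$ so that $x_1$ is the first vertex and $x_2$ is the second; this changes no characteristic polynomial, since a relabeling conjugates the adjacency matrix by a permutation matrix. Put $M=\lambda I-A(G)$. Because $x_1$ is a pendant vertex adjacent only to $x_2$, the first row of $M$ is $(\lambda,-1,0,\dots,0)$ and, by symmetry of $M$, the first column is its transpose.

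Next I would expand $\det M=P_{A(G)}(\lambda)$ along the first row. Only the entries in positions $(1,1)$ and $(1,2)$ are nonzero, so
\[
P_{A(G)}(\lambda)=\lambda\,\det(M_{11})-(-1)^{1+2}\det(M_{12})=\lambda\,\det(M_{11})+\det(M_{12}),
\]
where $M_{1j}$ denotes the minor of $M$ obtained by deleting row $1$ and column $j$. The minor $M_{11}$ is precisely $\lambda I-A(G_1)$, because deleting the first row and column of $M$ exactly removes the vertex $x_1$; hence $\det(M_{11})=P_{A(G_1)}(\lambda)$.

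It then remains to identify $\det(M_{12})$. In $M_{12}$ the column inherited from column $1$ of $M$ (with its first entry deleted) equals $(-1,0,\dots,0)^{\mathsf T}$, since the only off-diagonal nonzero entry of that column of $M$ sits in row $2$. Expanding $\det(M_{12})$ along this column leaves the single term $(-1)\cdot(-1)^{1+1}\det(N)$, where $N$ is obtained from $M$ by deleting rows and columns $1$ and $2$; thus $N=\lambda I-A(G_2)$ and $\det(M_{12})=-P_{A(G_2)}(\lambda)$. Substituting back gives $P_{A(G)}(\lambda)=\lambda\,P_{A(G_1)}(\lambda)-P_{A(G_2)}(\lambda)$, as claimed.

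There is no substantive obstacle here — this is the classical pendant-vertex reduction for characteristic polynomials — but the one point that requires care is the bookkeeping of the sign factors $(-1)^{i+j}$ and the index shifts produced by the two successive minor deletions, so I would verify those at each step. An essentially equivalent alternative would be to invoke Schwenk's vertex/edge deletion formulas or the Sachs coefficient theorem, but the direct determinant expansion above is the most transparent.
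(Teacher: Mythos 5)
Your argument is correct: the cofactor expansion of $\det(\lambda I-A(G))$ along the row of the pendant vertex, followed by the expansion of the remaining minor along the column it inherits from $x_1$, gives exactly $\lambda P_{A(G_1)}(\lambda)-P_{A(G_2)}(\lambda)$, and your sign bookkeeping ($(-1)^{1+2}\cdot(-1)=+1$ for the off-diagonal term, then the extra $-1$ from the surviving entry of $M_{12}$) checks out. The paper itself offers no proof of this lemma -- it is quoted from the lollipop-graph reference \cite{3} -- so there is nothing to compare against; your determinant expansion is the standard, self-contained justification of this pendant-vertex reduction.
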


\begin{lemma} \cite{5} For $nxn$ matrices $A$ and $B$, followings are equivalent :

\textbf{(i)} $A$ and $B$ are cospectral

\textbf{(ii) }$A$ and $B$ have the same characteristic polynomial

\textbf{(iii) }$tr(A^{i})=tr(B^{i})$ for $i=1,2,...,n$
\end{lemma}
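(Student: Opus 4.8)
The plan is to work over the complex field, where the characteristic polynomial of an $n \times n$ matrix always splits into linear factors, and to prove the three conditions equivalent by establishing the two pairwise links (i) $\Leftrightarrow$ (ii) and (ii) $\Leftrightarrow$ (iii). The link (i) $\Leftrightarrow$ (ii) is essentially definitional: the spectrum of a matrix is precisely the multiset of roots of its characteristic polynomial counted with algebraic multiplicity, and a monic polynomial of degree $n$ over $\mathbb{C}$ is recovered from that multiset via the factorization $\prod_{j=1}^{n}(\lambda-\lambda_{j})$. Hence $A$ and $B$ share a spectrum if and only if they share a characteristic polynomial, and I would dispose of this equivalence first with that one-line factorization argument.

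For (ii) $\Leftrightarrow$ (iii) the key preliminary is the identity $tr(A^{i})=\sum_{j=1}^{n}\lambda_{j}^{i}$, i.e.\ the $i$-th trace power equals the $i$-th power sum $p_{i}$ of the eigenvalues. I would justify this by triangularizing $A$ (Schur or Jordan form): if $A=Q^{-1}TQ$ with $T$ upper triangular carrying the $\lambda_{j}$ on its diagonal, then $A^{i}=Q^{-1}T^{i}Q$ has the same diagonal entries $\lambda_{j}^{i}$, and trace is similarity-invariant. Granting this, the direction (ii) $\Rightarrow$ (iii) is immediate: equal characteristic polynomials force identical eigenvalue multisets, hence identical power sums, so $tr(A^{i})=tr(B^{i})$ for all $i$ (indeed for every $i$, not merely $i\le n$).

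The substance of the lemma, and the step I expect to be the main obstacle, is the reverse implication (iii) $\Rightarrow$ (ii): recovering the entire characteristic polynomial from only the first $n$ power sums. Here I would invoke Newton's identities, which relate the power sums $p_{1},\dots,p_{n}$ to the elementary symmetric functions $e_{1},\dots,e_{n}$ of the eigenvalues (the $e_{k}$ being, up to sign, exactly the coefficients of the characteristic polynomial) through the triangular recursion $k\,e_{k}=\sum_{i=1}^{k}(-1)^{i-1}e_{k-i}\,p_{i}$. Because each $e_{k}$ is obtained from $e_{1},\dots,e_{k-1}$ and $p_{1},\dots,p_{k}$ by dividing by the integer $k$, and since $k\le n$, this recursion is invertible over a field of characteristic zero; the recursion closes at $k=n$ because $e_{k}=0$ for $k>n$, which is precisely why only $n$ power sums are needed. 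Thus equality of $tr(A^{i})$ and $tr(B^{i})$ for $i=1,\dots,n$ forces equality of all $e_{k}$, hence of the two characteristic polynomials. The delicate points to flag are the use of characteristic zero (so that division by $1,2,\dots,n$ is legitimate) and the verification that exactly $n$ power sums, no more, suffice — both of which are exactly what make the cutoff at $i=n$ in statement (iii) sharp.
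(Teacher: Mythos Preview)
Your argument is correct and is the standard textbook proof: the equivalence (i) $\Leftrightarrow$ (ii) is definitional, and Newton's identities in characteristic zero give (ii) $\Leftrightarrow$ (iii). There is nothing to compare against, however, because the paper does not supply a proof of this lemma at all; it is quoted in the Preliminaries section with a citation to van Dam and Haemers and used as a black box. So your write-up goes beyond what the paper itself contains, but it is sound.
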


\begin{lemma} \cite{5} For the adjacency matrix of a graph $G$, the following parameters can be deduced from the spectrum;

\textbf{(i)} the number of vertices

\textbf{(ii)} the number of edges

\textbf{(iii)} the number of closed walks of any fixed length.
\end{lemma}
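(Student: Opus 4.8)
The plan is to read off each quantity directly from the eigenvalues, using the fact that the spectrum by definition records the full multiset $\{\lambda_{1},\lambda_{2},\ldots,\lambda_{n}\}$ of roots of the characteristic polynomial. For part (i), I would argue that since $A(G)$ is an $n\times n$ matrix, its characteristic polynomial $P_{A(G)}(\lambda)$ has degree exactly $n$, so it has precisely $n$ roots counted with multiplicity. Hence the number of vertices $n$ is simply the total number of eigenvalues listed in the spectrum, and is therefore determined.

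For part (iii), the key identity is the combinatorial meaning of the powers of the adjacency matrix: for any positive integer $k$, the entry $(A^{k})_{ij}$ counts the number of walks of length $k$ from $v_{i}$ to $v_{j}$. In particular $(A^{k})_{ii}$ counts the closed walks of length $k$ based at $v_{i}$, so the total number of closed walks of length $k$ equals $\sum_{i=1}^{n}(A^{k})_{ii}=\text{tr}(A^{k})$. On the other hand, if $A=A(G)$ has eigenvalues $\lambda_{1},\ldots,\lambda_{n}$, then $A^{k}$ has eigenvalues $\lambda_{1}^{k},\ldots,\lambda_{n}^{k}$, whence $\text{tr}(A^{k})=\sum_{i=1}^{n}\lambda_{i}^{k}$. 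This power sum is a symmetric function of the spectrum, so the number of closed walks of any fixed length $k$ is determined by the spectrum.

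Part (ii) would then follow as the special case $k=2$ of part (iii). Since $A(G)$ has zero diagonal, $(A^{2})_{ii}=\sum_{j}a_{ij}a_{ji}=\sum_{j}a_{ij}^{2}=d_{i}$, the degree of $v_{i}$; equivalently, each edge $\{v_{i},v_{j}\}$ gives rise to exactly the two closed walks $v_{i}\to v_{j}\to v_{i}$ and $v_{j}\to v_{i}\to v_{j}$ of length $2$. Summing, $\text{tr}(A^{2})=\sum_{i}d_{i}=2m$, where $m$ is the number of edges, so $m=\tfrac{1}{2}\sum_{i=1}^{n}\lambda_{i}^{2}$ is determined by the spectrum.

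Since every step is a direct computation, I do not expect a genuine obstacle; the only points requiring care are the justification that $A^{k}$ has eigenvalues $\lambda_{i}^{k}$ (immediate from the diagonalizability of the real symmetric matrix $A$, or from the spectral mapping theorem) and the bookkeeping that correctly matches the diagonal entries of $A^{2}$ with the vertex degrees.
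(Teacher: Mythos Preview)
Your argument is correct and is the standard proof of this well-known fact. Note, however, that the paper does not supply its own proof of this lemma: it is quoted as a background result from reference~\cite{5} (van Dam and Haemers), so there is no in-paper proof to compare against. Your write-up would serve perfectly well as the omitted justification.
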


Let $N_{G}(H)$ be the number of subgraphs of a graph $G$ which are isomorphic to $H$ and let $N_{G}(i)$ be the number of closed walks of length $i$ in $G$.

\begin{lemma} \cite{12} The number of closed walks of length 2 and 3 of a graph $G$ are given in the following, where m is number of edges of $G$.

\textbf{(i)} $N_{G}(2)=2m$ and $N_{G}(3)=6N_{G}(K_{3})$.
\end{lemma}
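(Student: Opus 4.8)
The plan is to start from the fundamental identity $N_G(i)=\mathrm{tr}\big(A(G)^i\big)$. This holds because the $(j,k)$ entry of $A(G)^i$ equals the number of walks of length $i$ from $v_j$ to $v_k$; hence the diagonal entry $\big(A(G)^i\big)_{jj}$ counts the closed walks of length $i$ based at $v_j$, and summing over $j$ (the trace) counts all closed walks of length $i$. With this reduction in hand, the two assertions become direct computations of $\mathrm{tr}(A^2)$ and $\mathrm{tr}(A^3)$; I write $A=A(G)$ for brevity.

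For $N_G(2)$, I would expand
\[
\mathrm{tr}(A^2)=\sum_{j}\big(A^2\big)_{jj}=\sum_{j}\sum_{k}A_{jk}A_{kj}.
\]
Since $A$ is symmetric with $0$--$1$ entries, $A_{jk}A_{kj}=A_{jk}^2=A_{jk}$, so the double sum equals $\sum_{j,k}A_{jk}$, which counts each edge of $G$ exactly twice, once as the ordered pair $(j,k)$ and once as $(k,j)$. Hence $N_G(2)=\mathrm{tr}(A^2)=2m$.

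For $N_G(3)$, I would similarly expand
\[
\mathrm{tr}(A^3)=\sum_{j}\sum_{k}\sum_{l}A_{jk}A_{kl}A_{lj}.
\]
A summand equals $1$ precisely when $v_j\sim v_k$, $v_k\sim v_l$, and $v_l\sim v_j$ simultaneously, i.e.\ when the ordered triple $(j,k,l)$ traces a triangle of $G$, and it vanishes otherwise. Thus $\mathrm{tr}(A^3)$ counts the ordered closed triangular walks of length $3$. Each unordered triangle on three vertices produces exactly $3\times 2=6$ such ordered triples: three choices for the starting vertex and two choices of orientation around the triangle. Therefore $N_G(3)=\mathrm{tr}(A^3)=6\,N_G(K_3)$.

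The whole argument is essentially bookkeeping, and the only point deserving care is the counting factor $6$ in the second part. Here one must check that degenerate summands with a repeated index (for instance $j=k$, or any term using a loop) contribute nothing; this is guaranteed because $A$ has zero diagonal, so every surviving summand genuinely corresponds to three distinct vertices forming a triangle. This rules out any spurious closed walks of length $3$ and confirms that the factor $6$ is exact.
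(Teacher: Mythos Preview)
Your proof is correct and is the standard trace-based argument for these identities. Note, however, that the paper does not actually prove this lemma: it is quoted as a known result from \cite{12} and stated without proof in the preliminaries. So there is no ``paper's own proof'' to compare against; your argument simply supplies the elementary justification that the authors chose to cite rather than reproduce.
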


In the rest of the paper, we denote the number of subgraphs of a graph $G$ which are isomorphic to complete graph $K_{3}$ with $t(G)$.

\begin{theorem}\cite{1} For any integers $p\geq3$ and $q\geq1$, if we denote the spectral radius of $A(Kite_{p,q})$ with $\rho(Kite_{p,q})$ then

\[p-1+\frac{1}{p^{2}}+\frac{1}{p^{3}}<\rho (Kite_{p,q})<p-1+\frac{1}{4p}+\frac{1}{p^{2}-2p}\]

\end{theorem}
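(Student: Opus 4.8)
The plan is to work with the Perron eigenvector of $A(Kite_{p,q})$ and exploit the twin vertices inside the clique. Write $\rho=\rho(Kite_{p,q})$, let $x>0$ be the corresponding eigenvector, and label the clique $K_p$ by $u_1,\dots,u_p$ with $u_1$ the vertex to which the path $P_q=w_1w_2\cdots w_q$ is attached ($u_1\sim w_1$). The vertices $u_2,\dots,u_p$ are pairwise similar, so $x$ is constant $=\alpha$ on them; set $\beta=x(u_1)$, $y_j=x(w_j)$, and $\delta:=\rho-(p-1)$, which is positive since $K_p$ is a proper subgraph of the connected graph $Kite_{p,q}$. I would first read off the eigenvalue equations. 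At a vertex $u_i$ with $i\ge2$: $\rho\alpha=\beta+(p-2)\alpha$, so $\beta=(\rho-p+2)\alpha=(1+\delta)\alpha$. Substituting into the equation at $u_1$, $\rho\beta=(p-1)\alpha+y_1$, gives $y_1=(\rho-p+1)(\rho+1)\alpha=\delta(p+\delta)\alpha$. Along the path, $y_{j-1}=\rho y_j-y_{j+1}$ for $1\le j\le q-1$ (with $y_0:=\beta$) and $y_{q-1}=\rho y_q$ at the leaf. In particular
\[
\frac{y_1}{y_0}=\frac{\delta(p+\delta)}{1+\delta}.
\]

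Both bounds will come from pinching $y_1/y_0$ between $1/\rho$ and $1/r$, where $r>1$ is the larger root of $t^2-\rho t+1=0$ (real since $\rho>p-1\ge 2$). The lower pinch is immediate: $y_0=\rho y_1-y_2\le\rho y_1$ (interpreting $y_2:=0$ when $q=1$), so $y_1/y_0\ge 1/\rho$. The upper pinch I would prove by downward induction on $j$, showing $y_j>r\,y_{j+1}$ for $0\le j\le q-1$: the base case is $y_{q-1}=\rho y_q>r\,y_q$ (as $r<\rho$), and the inductive step is $y_j=\rho y_{j+1}-y_{j+2}>\rho y_{j+1}-y_{j+1}/r=r\,y_{j+1}$, using $\rho-1/r=r$ and positivity of the entries. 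Taking $j=0$ gives $y_1/y_0<1/r$. Hence
\[
\frac1{\rho}\ \le\ \frac{\delta(p+\delta)}{1+\delta}\ <\ \frac1r .
\]
Note this is where the uniformity in $q$ enters: the only facts about the path that survive are $y_j\ge 0$, so the same two inequalities hold for every $q\ge1$.

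For the lower bound, the left inequality rearranges to $g(\delta)\ge 1$, where $g(\delta)=\delta(p+\delta)(p-1+\delta)-\delta=\delta^3+(2p-1)\delta^2+(p^2-p-1)\delta$ is strictly increasing on $[0,\infty)$ (its derivative is a sum of nonnegative terms plus $p^2-p-1>0$). So it suffices to check $g\!\left(\tfrac1{p^2}+\tfrac1{p^3}\right)<1$, a short polynomial inequality (equivalently $-2p^7+p^6+3p^5-p^3+(p+1)^3<0$ after clearing denominators), which forces $\delta>\tfrac1{p^2}+\tfrac1{p^3}$. For the upper bound, from $y_1/y_0<1/r<1$ one first gets $\delta(p+\delta)<1+\delta$, hence $\delta<\tfrac{-(p-1)+\sqrt{(p-1)^2+4}}{2}<\tfrac1{p-1}$; then, since $\tfrac{p+\delta}{1+\delta}$ is decreasing in $\delta$, this crude bound gives $\tfrac{\delta(p+\delta)}{1+\delta}>(p-1)\delta$, so $(p-1)\delta<\tfrac1r=\tfrac{2}{\rho+\sqrt{\rho^2-4}}<\tfrac{2}{(p-1)+\sqrt{(p-1)^2-4}}$. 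For $p\ge4$ the elementary estimate $\sqrt{(p-1)^2-4}>(p-1)-\tfrac3{p-1}$ (valid once $(p-1)^2\ge 9$) yields $\delta<\tfrac1{(p-1)^2-3/2}$, and one checks $\tfrac1{(p-1)^2-3/2}<\tfrac1{4p}+\tfrac1{p^2-2p}$. The case $p=3$ is done by hand: there $\sqrt{(p-1)^2-4}=0$, so $1/r<1$ and $\delta<\sqrt2-1<\tfrac5{12}=\tfrac1{12}+\tfrac13$.

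I expect the main obstacle to be that last step — making the constants in the upper bound come out right uniformly in $p$. The trivial estimate $y_1/y_0<1$ is too weak for large $p$: it only yields $\delta\lesssim 1/(p-1)$, far from the required $O(1/p^2)$, so one is forced to use the sharper $y_1/y_0<1/r$ and then to control $1/r$ via an explicit lower bound on $\sqrt{(p-1)^2-4}$. That square-root bound degenerates exactly at $p=3$, which therefore has to be peeled off and treated separately; everything else (the symmetry reduction, the two pinching inequalities, the monotonicity of $g$, and the final numerical comparisons) is routine.
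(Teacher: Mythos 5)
Your argument is correct. Note first that the paper does not prove this statement at all: it is quoted as Theorem 2.5 with a citation to Stevanovi\'c and Hansen, so there is no internal proof to compare against, and what you have written is a genuine self-contained derivation. I checked the details: the symmetry reduction gives $\beta=(1+\delta)\alpha$ and $y_1=\delta(p+\delta)\alpha$ correctly; the two pinches $1/\rho\le y_1/y_0<1/r$ are sound (the downward induction $y_j>ry_{j+1}$ works for all $q\ge1$, including $q=1$ where the leaf equation is $y_0=\rho y_1$); the polynomial $-2p^7+p^6+3p^5-p^3+(p+1)^3$ is indeed negative for $p\ge3$ (it equals $-p^5(2p-3)(p+1)+3p^2+3p+1$), which gives the lower bound; and the upper-bound chain $(p-1)\delta<1/r<2/\bigl((p-1)+\sqrt{(p-1)^2-4}\bigr)$ together with $\sqrt{(p-1)^2-4}>(p-1)-3/(p-1)$ yields $\delta<1/\bigl((p-1)^2-3/2\bigr)<1/(4p)+1/(p^2-2p)$ for $p\ge4$, while for $p=3$ the crude bound $\delta<\sqrt2-1<5/12$ suffices. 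Your method (reading the spectral radius off the Perron eigenvector and pinching the transfer ratio along the path between $1/\rho$ and the reciprocal of the larger root of $t^2-\rho t+1$) is the standard way such bounds are obtained, and its main virtue here is exactly the one you identify: the path contributes only through the two universal inequalities on $y_1/y_0$, so the bounds are automatically uniform in $q$. The only cosmetic slip is the remark that the estimate $\sqrt{(p-1)^2-4}>(p-1)-3/(p-1)$ is ``valid once $(p-1)^2\ge9$''; it actually holds once $(p-1)^2>9/2$, but since you only invoke it for $p\ge4$ this does not affect anything.
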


\begin{theorem}\cite{14} Let $G$ be a graph with n vertices, m edges and spectral radius $\mu$. If $G$ is $K_{r+1}-free$, then

\[\mu\leq\sqrt{2m(\frac{r-1}{r})}\]
\end{theorem}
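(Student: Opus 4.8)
The plan is to bound $\mu$ through the Rayleigh quotient and then combine an elementary Cauchy--Schwarz estimate with the Motzkin--Straus theorem, which is the device that converts the forbidden-clique hypothesis into a numerical bound. First I would fix a convenient eigenvector. Since $A(G)$ is symmetric and entrywise nonnegative, its spectral radius is $\mu=\max_{\|z\|_{2}=1}z^{T}A(G)z$, and a maximizer can be chosen with all nonnegative entries: replacing any unit vector $z$ by its entrywise absolute value $|z|$ leaves the norm unchanged and does not decrease $z^{T}A(G)z=\sum_{i,j}A_{ij}z_{i}z_{j}$. Fix such a $z\ge 0$ with $\sum_{i}z_{i}^{2}=1$; then
\[\mu=z^{T}A(G)z=2\sum_{ij\in E}z_{i}z_{j}.\]

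Second, I would apply Cauchy--Schwarz over the edge set, treating each of the $m$ edges as contributing the factor $z_{i}z_{j}$:
\[\sum_{ij\in E}z_{i}z_{j}\le\Big(\sum_{ij\in E}1\Big)^{1/2}\Big(\sum_{ij\in E}z_{i}^{2}z_{j}^{2}\Big)^{1/2}=\sqrt{m}\,\Big(\sum_{ij\in E}z_{i}^{2}z_{j}^{2}\Big)^{1/2}.\]
Third, and this is the crux, I would control $\sum_{ij\in E}z_{i}^{2}z_{j}^{2}$ via the clique structure. Put $p_{i}=z_{i}^{2}$; since $z\ge 0$ and $\sum_{i}z_{i}^{2}=1$, the vector $p$ lies in the standard simplex $\Delta=\{p\ge 0:\sum_{i}p_{i}=1\}$. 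The Motzkin--Straus theorem says that for a graph with clique number $\omega$ one has $\max_{p\in\Delta}\sum_{ij\in E}p_{i}p_{j}=\tfrac12\big(1-\tfrac{1}{\omega}\big)$. As $G$ is $K_{r+1}$-free we have $w(G)\le r$, so $\sum_{ij\in E}z_{i}^{2}z_{j}^{2}=\sum_{ij\in E}p_{i}p_{j}\le\tfrac12\big(1-\tfrac{1}{r}\big)$. Substituting into the previous display and multiplying by $2$ gives
\[\mu=2\sum_{ij\in E}z_{i}z_{j}\le 2\sqrt{m}\cdot\sqrt{\tfrac12\Big(1-\tfrac1r\Big)}=\sqrt{2m\Big(\tfrac{r-1}{r}\Big)},\]
which is the asserted inequality.

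The main obstacle is the Motzkin--Straus step: the rest is routine, but passing from ``no $K_{r+1}$'' to the quadratic bound $\tfrac12(1-1/\omega)$ is where the real combinatorics sits. If one prefers not to quote it as a black box, I would reprove the upper half by the standard transfer argument: among maximizers $p$ of $\sum_{ij\in E}p_{i}p_{j}$ choose one of smallest support, and observe that its support must induce a clique, since if two support vertices $u,v$ were non-adjacent then shifting all of $v$'s mass onto $u$ does not decrease the objective yet reduces the support, a contradiction; evaluating the objective on a clique of order at most $r$ then yields $\tfrac12(1-1/r)$. Tracking equality in both the Cauchy--Schwarz and Motzkin--Straus steps would further show that the extremal graphs are exactly the complete balanced $r$-partite (Turán) graphs, recovering the spectral Tur\'an phenomenon.
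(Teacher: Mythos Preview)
Your argument is correct. Note, however, that the paper does not supply its own proof of this statement: Theorem~2.6 is quoted from Nikiforov~\cite{14} in the Preliminaries section and used as a black box. There is therefore no ``paper's proof'' to compare against directly.

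That said, what you have written is essentially Nikiforov's original argument: the combination of the Rayleigh quotient for $\mu$ with a nonnegative unit eigenvector, Cauchy--Schwarz over the edge set, and the Motzkin--Straus bound $\max_{p\in\Delta}\sum_{ij\in E}p_ip_j=\tfrac12(1-1/\omega)$ applied to $p_i=z_i^{2}$. Your sketch of the transfer argument for the Motzkin--Straus upper bound is also the standard one and is fine (the key point being that for non-adjacent $u,v$ the objective is linear in the mass shifted between them, so one may reduce the support without loss). The remark on the equality case is consistent with Nikiforov's characterization, though a full justification requires tracking equality in both steps with some care; since the paper only uses the inequality, this is not needed here.
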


\begin{theorem}\cite{4} Let $K_{n}^{m}$ denote the graph obtained by attaching m pendant edges to a vertex of complete graph $K_{n-m}$. The graph $K_{n}^{m}$ and its complement are determined by their adjacency spectrum.
\end{theorem}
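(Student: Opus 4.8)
Let me first fix notation: write $k:=n-m$, so that $G:=K_n^m$ is a clique $K_k$ with one distinguished vertex $c$ carrying $m$ pendant neighbours; I may assume $k\ge 3$ and $m\ge 1$, the cases $K_n$ ($m=0$) and stars ($k=2$) being classical. The plan is to read off the usual spectral invariants and then recover a cospectral mate from its local structure. First I would compute $P_G(\lambda)$ — most easily from the equitable partition $\{c\}\mid V(K_k)\setminus\{c\}\mid\{\text{pendants}\}$, or by iterating Lemma~2.1 along a pendant edge — obtaining
\[
\mathrm{spec}(G)=\Big\{\,[0]^{\,m-1},\ [-1]^{\,k-2},\ \theta_1,\theta_2,\theta_3\,\Big\},
\]
where $\theta_1>\theta_2>\theta_3$ are the roots of an explicit cubic (none of them $0$ or $-1$ for $k\ge3$, $m\ge1$), with $\theta_1>k-1$ the spectral radius and $\theta_3<-1$ (since $G$ contains an induced $P_3$). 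If $H$ is cospectral with $G$, then by Lemmas~2.2--2.4 it has $n$ vertices, $\binom{k}{2}+m$ edges and $t(H)=t(G)=\binom{k}{3}$ triangles, and moreover $0$ is an eigenvalue of $H$ of multiplicity $m-1$ and $-1$ of multiplicity $k-2$; equivalently $\mathrm{rank}\,A_H=k+1$ and $\mathrm{rank}(A_H+I)=m+2$.

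The heart of the argument is to promote these data to $H\cong K_n^m$. I would read the two rank/multiplicity conditions through \emph{twin} vertices: the $m-1$ surplus zeros say that $H$ contains a set of $m$ pairwise non-adjacent vertices with a common open neighbourhood, the $k-2$ surplus $(-1)$'s say that $H$ contains a set of $k-1$ pairwise adjacent vertices with a common closed neighbourhood, and the residual cubic factor says that the graph obtained by collapsing twin classes has bounded size. Meanwhile $t(H)=\binom{k}{3}$ already forces $\omega(H)\le k$ (a $K_{k+1}$ alone would have too many triangles), and the $(-1)$-multiplicity pushes in the opposite direction; a finer look at how the triangles are distributed should then give $\omega(H)=k$ with a unique $K_k$ holding all triangles, the $(k-1)$-twin class equal to its non-central vertices, and the $m$-twin class equal to pendants hung on one and the same vertex — i.e. $H\cong K_n^m$. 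If the twin bookkeeping needs reinforcement I would bring in the fourth-moment identity $\mathrm{tr}(A_H^4)=2\sum_v d_H(v)^2-2e+8\,c_4(H)$, which ties $\sum_v d_H(v)^2$ and $c_4(H)$ to spectrally known quantities, and the Wilf-type bound of Theorem~2.6 applied through $\rho(H)=\theta_1>k-1$.

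For the complement one cannot transfer cospectrality, since $K_n^m$ is not regular, so I would argue about $\overline G$ on its own. A short check gives $\overline G=K_1\ \sqcup\ \big(K_m\vee\overline{K_{k-1}}\big)$: a single isolated vertex together with a complete split graph, with spectrum $\{[0]^{\,k-1},[-1]^{\,m-1},\eta_1,\eta_2\}$ for the two roots $\eta_i$ of an explicit quadratic. For $H'$ cospectral with $\overline G$, the same bookkeeping yields $n$ vertices, the correct edge and triangle counts, $\mathrm{rank}\,A_{H'}=m+1$ and $\mathrm{rank}(A_{H'}+I)=k+1$. I would first show $H'$ has exactly one isolated vertex (the $0$-eigenspace is spanned by that vertex together with a duplicate class, and $\mathrm{rank}\,A_{H'}=m+1$ leaves no room for a second one once a $K_m$ has been located inside $H'$), delete it, and then rerun the clique/triangle/twin analysis on the $(n-1)$-vertex remainder to identify it with $K_m\vee\overline{K_{k-1}}$; hence $\overline{K_n^m}$ is DAS.

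The main obstacle is clearly the middle step: converting the numerical fingerprint $(n,\ e,\ t,\ \mathrm{mult}_H(0),\ \mathrm{mult}_H(-1))$ into the exact adjacency pattern, that is, ruling out every graph with the same clique and triangle counts that distributes the $m$ surplus edges differently — as a pendant path, a matching spread over several clique vertices, or a small attached gadget. Getting the twin/rank argument to genuinely exclude all such alternatives (rather than merely to single out $K_n^m$ as the obvious candidate), and establishing the analogous rigidity of the complete split graph together with the uniqueness of the isolated vertex on the complement side, is where I expect nearly all of the effort to lie.
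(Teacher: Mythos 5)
This statement is Theorem 2.7 of the paper, which is quoted from reference [4] (Zhang--Zhang) and is not proved in the paper at all, so there is no in-paper argument to compare yours with; I can only judge the proposal on its own terms. Your preliminary computations are correct: with $k=n-m$ the spectrum of $K_n^m$ is indeed $\{[0]^{m-1},[-1]^{k-2},\theta_1,\theta_2,\theta_3\}$ coming from the duplicate pendants, the coduplicate clique vertices and the $3\times 3$ quotient of the equitable partition; the complement is indeed $K_1\sqcup(K_m\vee\overline{K_{k-1}})$; and you are right that cospectrality does not transfer to complements here because the graph is irregular, so the complement must be treated separately.

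However, the proposal has a genuine gap, and in fact two. First, the inference you place at ``the heart of the argument'' --- that a cospectral mate $H$ having $0$ with multiplicity $m-1$ forces an independent set of $m$ vertices with a common open neighbourhood, and that $-1$ with multiplicity $k-2$ forces $k-1$ pairwise adjacent closed twins --- is false in general. Twin classes produce these eigenvalues, but the converse fails: the kernel of $A_H$ need not be supported on duplicate classes (e.g.\ $P_5$ has nullity $1$ and no duplicated vertices; a tree has nullity $n-2\mu$ with $\mu$ the matching number, unrelated to twins). So the rank conditions $\mathrm{rank}\,A_H=k+1$ and $\mathrm{rank}(A_H+I)=m+2$ do not by themselves yield the twin structure you want, and the whole middle step collapses as written. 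Second, even granting that step, you explicitly defer the decisive combinatorial work --- showing that the counts $(n,e,t)$ together with whatever structure survives rule out every competitor that redistributes the $m$ extra edges --- with phrases like ``a finer look \ldots should then give'' and your own closing admission that this is ``where I expect nearly all of the effort to lie.'' That effort is the proof; what you have is a plausible plan with its central inference unsound and its central verification missing. To repair it you would need either a correct mechanism for localizing the eigenvalues $0$ and $-1$ (for instance via interlacing and the structure of star complements, or the walk-generating-function/main-eigenvalue machinery actually used in [4]) or a direct exhaustive analysis of the graphs realizing the given vertex, edge and triangle counts.
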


\section{Characteristic Polynomials of Kite Graphs}

We use similar method with \cite{3} to obtain the general form of characteristic polynomials of $Kite_{p,q}$ graphs. Obviously, if we delete the vertex with one degree from short kite graph, the induced subgraph will be the complete graph $K_{p}$. Then, by deleting the vertex with one degree and its adjacent vertex, we obtain complete graph with $p-1$ vertices, $K_{p-1}$. From Lemma 2.1, we get

\begin{eqnarray*}
P_{A(Kite_{p,1})}(\lambda ) &=&\lambda P_{A(K_{p})}(\lambda
)-P_{A(K_{p-1})}(\lambda )  \notag \\
&=&\lambda (\lambda -p+1)(\lambda +1)^{p-1}-[(\lambda -p+2)(\lambda
+1)^{p-2}]  \notag \\
&=&(\lambda +1)^{p-2}[(\lambda ^{2}-\lambda p+\lambda )(\lambda +1)-\lambda
+p-2]  \notag \\
&=&(\lambda +1)^{p-2}[\lambda ^{3}-(p-2)\lambda ^{2}-\lambda p+p-2]
\end{eqnarray*}

Similarly, for $Kite_{p,2}$ induced subgraphs will be $Kite_{p,1}$ and $K_{p}$ respectively. By Lemma 2.1, we get

\begin{eqnarray*}
P_{A(Kite_{p,2})}(\lambda ) &=&\lambda P_{A(Kite_{p,1})}(\lambda
)-P_{A(K_{p})})(\lambda ) \\
&=&\lambda (\lambda P_{A(K_{p})}(\lambda )-P_{A(K_{p-1})}(\lambda
))-P_{A(K_{p})})(\lambda ) \\
&=&(\lambda ^{2}-1)P_{A(K_{p})}(\lambda )-\lambda P_{A(K_{p-1})}(\lambda )
\end{eqnarray*}%

By using these polynomials, let us calculate the characteristic polynomial of $Kite_{p,q}$ where $n=p+q$. Again, by Lemma 2.1  we have

\begin{eqnarray*}P_{A(Kite_{p,1})}(\lambda ) &=&\lambda P_{A(K_{p})}(\lambda)-P_{A(K_{p-1})}(\lambda )
\end{eqnarray*}

Coefficients of above equation are $b_{1}=-1$, $a_{1}=\lambda$. Simultaneously, we get

\begin{eqnarray*}P_{A(Kite_{p,2})}(\lambda ) &=&(\lambda^{2}-1) P_{A(K_{p})}(\lambda)-\lambda P_{A(K_{p-1})}(\lambda )
\end{eqnarray*}

and coefficients of above equation are $b_{2}=-a_{1}=-\lambda$, $a_{2}=\lambda a_{1}-1=\lambda ^{2}-1$. Then for $Kite_{p,3}$, we have

\begin{eqnarray*}P_{A(Kite_{p,3})}(\lambda ) &=&\lambda P_{A(Kite_{p,2})}(\lambda)-P_{A(Kite_{p,1})})(\lambda ) \\ &=&(\lambda (\lambda ^{2}-1)-\lambda) P_{A(K_{p})}(\lambda)-((\lambda^{2}-1)P_{A(K_{p-1})}(\lambda ))
\end{eqnarray*}

and coefficients of above equation are $b_{3}=-a_{2}=-(\lambda^{2}-1),a_{3}=\lambda a_{2}-a_{1}=\lambda (\lambda ^{2}-1)-\lambda $. In the following steps, for $n\geq 3$, $a_{n}=\lambda a_{n-1}-a_{n-2}$. From this difference equation, we get

\begin{equation*}
a_{n}=\sum_{k=0}^{n}(\frac{\lambda +\sqrt{\lambda ^{2}-4}}{2})^{k}(\frac{%
\lambda -\sqrt{\lambda ^{2}-4}}{2})^{n-k}
\end{equation*}

Now, let $\lambda =2cos\theta $ and $u=e^{i\theta }$. Then, we have

\begin{equation*}
a_{n}=\sum_{k=0}^{n}u^{2k-n}=\frac{u^{-n}(1-u^{2n+2})}{1-u^{2}}
\end{equation*}

and by calculation the characteristic polynomial of any kite graph, $Kite_{p,q}$, where $n=p+q$, is
\begin{eqnarray*}
P_{A(\emph{Kite}_{p,q})}(u+u^{-1})
&=&a_{n-p}P_{A(K_{p})}(u+u^{-1})-a_{n-p-1}P_{A(K_{p-1})}(u+u^{-1}) \\
&=&\frac{u^{-n+p}(1-u^{2n-2p+2})}{1-u^{2}}%
.((u+u^{-1}-p+1).(u+u^{-1}+1)^{p-1}) \\
&&-\frac{u^{-n+p+1}(1-u^{2n-2p+4})}{1-u^{2}}%
.((u+u^{-1}-p+2).(u+u^{-1}+1)^{p-2}) \\
&=&\frac{u^{-n+p}(1+u-u^{-1})^{p-2}}{1-u^{2}}%
[(2-p).(1+u^{-1}-u^{2n-2p+2}-u^{2n-2p+3}) \\
&&+(u^{-2}-u^{2n-2p+4})]\\
&=&\frac{u^{-q}(1+u-u^{-1})^{p-2}}{1-u^{2}}%
[(2-p).(1+u^{-1}-u^{2q+2}-u^{2q+3}) \\
&&+(u^{-2}-u^{2q+4})]\\
\end{eqnarray*}

\begin{theorem} No two non-isomorphic kite graphs have the same adjacency spectrum.
\end{theorem}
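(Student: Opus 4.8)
The plan is to show that the adjacency spectrum of $Kite_{p,q}$ determines both parameters $p$ and $q$, so that two non-isomorphic kites (differing in $p$, in $q$, or in both) cannot be cospectral. The total number of vertices $n = p+q$ is a spectral invariant by Lemma 2.3(i), so it suffices to recover $p$ (equivalently $q$) from the spectrum. The natural candidate for doing this is the spectral radius: by Theorem 2.5, $\rho(Kite_{p,q})$ lies in the narrow interval $\bigl(p-1+\tfrac{1}{p^2}+\tfrac{1}{p^3},\ p-1+\tfrac{1}{4p}+\tfrac{1}{p^2-2p}\bigr)$, which is contained in $(p-1, p)$ for all $p\ge 3$. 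Hence $p = \lfloor \rho(Kite_{p,q})\rfloor + 1$ is determined by the spectrum, and then $q = n - p$ is determined as well.

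The argument I would write out is therefore: suppose $Kite_{p,q}$ and $Kite_{p',q'}$ are cospectral. Then they have equal spectral radius $\rho$ and equal vertex count $p+q = p'+q'$. From Theorem 2.5, since the bounding interval for $\rho(Kite_{p,q})$ sits strictly inside $(p-1,p)$ — one checks $\tfrac{1}{4p}+\tfrac{1}{p^2-2p} < 1$ for $p\ge 3$ — we get $\lfloor\rho\rfloor = p-1$, and likewise $\lfloor\rho\rfloor = p'-1$, forcing $p=p'$ and then $q=q'$. So the two kite graphs are in fact identical, proving no two \emph{non-isomorphic} kites are cospectral.

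One edge case needs attention: Theorem 2.5 is stated for $p\ge 3$, which is exactly the regime in which $Kite_{p,q}$ is defined (a "complete graph $K_p$" appended to a path, with the interesting structure requiring $p\ge 3$; for $p\le 2$ the kite degenerates to a path). I would note that for $q\ge 1$ and $p\ge 3$ the interval bound genuinely excludes the endpoints $p-1$ and $p$, so the floor computation is unambiguous; the quantities $\tfrac{1}{p^2}+\tfrac{1}{p^3}>0$ and $\tfrac{1}{4p}+\tfrac{1}{p^2-2p}<1$ should be verified explicitly (the latter is easy: for $p\ge 3$, $\tfrac{1}{4p}\le \tfrac{1}{12}$ and $\tfrac{1}{p^2-2p}\le \tfrac{1}{3}$, summing to less than $1$).

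I do not expect a serious obstacle here — the hard analytic work is already packaged inside Theorem 2.5 (the spectral radius localization), and the proof is essentially a one-line deduction from it together with the invariance of $n$. The only thing to be careful about is making the interval-containment inequality fully rigorous for all $p\ge 3$ rather than just asserting it, and making sure the degenerate small-$p$ cases are either covered by the hypothesis $p\ge 3$ or dispatched separately. An alternative route, if one wanted to avoid invoking Theorem 2.5, would be to read $p$ off from the explicit characteristic polynomial derived just above the theorem statement — for instance, the factor $(u+u^{-1}+1)^{p-2} = (\lambda+1)^{p-2}\cdot(\text{unit})$ signals that $-1$ is an eigenvalue with multiplicity at least $p-2$, and combined with counting closed walks of length $2$ and $3$ (Lemmas 2.3, 2.4) one could pin down $p$ — but the spectral-radius argument is cleaner and shorter, so that is the one I would present.
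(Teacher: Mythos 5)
Your proof is correct, but it takes a genuinely different route from the paper. The paper works directly with the characteristic polynomial: it substitutes $\lambda=u+u^{-1}$, writes $P_{A(Kite_{p,q})}$ in closed form via the recursion of Lemma 2.1, equates the two expressions, cancels the common factors $u^{p_2}(1+u-u^{-1})^{p_2}$, and then differentiates the resulting Laurent-polynomial identity $2n-2p_2+5$ times to isolate the coefficient that forces $p_1=p_2$. Your argument instead packages all the analytic work into the Stevanovi\'c--Hansen bounds of Theorem 2.5: since $\tfrac{1}{4p}+\tfrac{1}{p^2-2p}\le\tfrac{1}{12}+\tfrac{1}{3}<1$ for $p\ge 3$, the spectral radius lies strictly in $(p-1,p)$, so $p=\lfloor\rho\rfloor+1$ and $q=n-p$ are both spectral invariants. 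What each approach buys: yours is shorter, cleaner, and avoids the somewhat delicate derivative manipulation (which the paper justifies only loosely), at the cost of importing Theorem 2.5 --- though the paper already relies on that theorem in Lemma 4.1, so this is no new dependency; the paper's computation, on the other hand, is self-contained and produces the explicit characteristic polynomial, which is reused elsewhere. The one loose end in your write-up is the degenerate range $p\le 2$, where $Kite_{p,q}$ is a path and Theorem 2.5 does not apply (for $p=2$ its upper bound is not even defined); you flag this but should actually dispatch it, which is immediate: a path has spectral radius below $2$ while any kite with $p\ge 3$ has spectral radius above $p-1\ge 2$, and two cospectral paths have the same number of vertices and are therefore isomorphic.
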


\begin{proof} Assume that there are two cospectral kite graphs with number of vertices respectively, $p_{1}+q_{1}$ and $p_{2}+q_{2}$. Since they are cospectral,
they must have same number of vertices and same characteristic polynomials. Hence, $n=p_{1}+q_{1}=p_{2}+q_{2}$ and we get

\begin{equation*}
P_{A(Kite_{p_{1},q_{1}})}(u+u^{-1})=P_{A(Kite_{p_{2},q_{2}})}(u+u^{-1})
\end{equation*}%
i.e.%
\begin{eqnarray*}
&&\frac{u^{-n+p_{1}}(1+u-u^{-1})^{p_{1}-2}}{1-u^{2}}%
[(2-p_{1}).(1+u^{-1}-u^{2n-2p_{1}+2}-u^{2n-2p_{1}+3}) \\
&&+(u^{-2}-u^{2n-2p_{1}+4})] \\
&=&\frac{u^{-n+p_{2}}(1+u-u^{-1})^{p_{2}-2}}{1-u^{2}}%
[(2-p_{2}).(1+u^{-1}-u^{2n-2p_{2}+2}-u^{2n-2p_{2}+3}) \\
&&+(u^{-2}-u^{2n-2p_{2}+4}])
\end{eqnarray*}%
i.e.%
\begin{eqnarray*}
&&u^{p_{1}}.(1+u-u^{-1})^{^{p_{1}}}.[(2-p_{1}).(1+u^{-1}-u^{2n-2p_{1}+2}-u^{2n-2p_{1}+3})
\\
&&+(u^{-2}-u^{2n-2p_{1}+4})] \\
&=&u^{p_{2}}.(1+u-u^{-1})^{^{p_{2}}}.[(2-p_{2}).(1+u^{-1}-u^{2n-2p_{2}+2}-u^{2n-2p_{2}+3})
\\
&&+(u^{-2}-u^{2n-2p_{2}+4})]
\end{eqnarray*}%

Let $p_{1}>p_{2}$. It follows that $n-p_{2}>n-p_{1}$. Then, we
have
\begin{eqnarray*}
&&u^{p_{1}-p_{2}}.(1+u-u^{-1})^{^{p_{1}-p_{2}}}%
\{[(2-p_{1}).(1+u^{-1}-u^{2n-2p_{1}+2}-u^{2n-2p_{1}+3}) \\
&&+(u^{-2}-u^{2n-2p_{1}+4})]-[(2-p_{2}).(1+u^{-1}-u^{2n-2p_{2}+2}-u^{2n-2p_{2}+3})
\\
&&+(u^{-2}-u^{2n-2p_{2}+4})]\}=0
\end{eqnarray*}
By using the fact that $u\neq 0$ and $1+u+u^{-1}\neq 0$, we get%
\begin{eqnarray*}
f(u)
&=&[(2-p_{1}).(1+u^{-1}-u^{2n-2p_{1}+2}-u^{2n-2p_{1}+3})+(u^{-2}-u^{2n-2p_{1}+4})]
\\
&&-[(2-p_{2}).(1+u^{-1}-u^{2n-2p_{2}+2}-u^{2n-2p_{2}+3})+(u^{-2}-u^{2n-2p_{2}+4})]
\\
&=&0
\end{eqnarray*}%
Since $f(u)=0$, the derivation of $(2n-2p_{2}+5)$th of $f$  equals to
zero again. Thus, we have
\begin{equation*}
\lbrack
(p_{1}-2)(2n-2p_{2}+4)!(u^{-2n+2p_{2}-6})]-[(p_{2}-2).(2n-2p_{2}+4)!(u^{-2n+2p_{2}-6})]=0
\end{equation*}%
i.e.%
\begin{equation*}
\lbrack (p_{1}-2)-(p_{2}-2)].(u^{-2n+2p_{2}-6})=0
\end{equation*}%
i.e.%
\begin{equation*}
p_{1}=p_{2}
\end{equation*}%
since $u\neq 0$. This is a contradiction with our assumption that $p_{1}>p_{2}$. For $p_{2}>p_{1}$, we get the similar contradiction. So $p_{1}$ must be equal to $p_{2}$. Hence $q_{1}=q_{2}$ and these graphs are isomorphic.
\end{proof}

\section{Spectral Determination of $Kite_{p,2}$ Graphs}

\begin{lemma} Let $G$ be a graph which is cospectral with $Kite_{p,q}$. Then we get $$w(G)\geq p-2q+1$$.
\end{lemma}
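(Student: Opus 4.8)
The plan is to use only the two spectral invariants preserved under cospectrality---the vertex count $n=p+q$ and the edge count $m=\binom{p}{2}+q$ (Lemma 2.3)---together with the lower bound on $\rho(Kite_{p,q})$ from Theorem 2.5 (\cite{1}) and Nikiforov's upper bound for $K_{r+1}$-free graphs from Theorem 2.6 (\cite{14}). Since $w(G)\ge 1$ always, the statement is trivial unless $p-2q+1\ge 2$, i.e.\ unless $p\ge 2q+1$; in particular I may assume $p\ge 3$, which is exactly what Theorem 2.5 requires, and in this range $p-2q-1\ge 0$, which keeps the final algebra clean.

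First I would record that, because $G$ is cospectral with $Kite_{p,q}$, it has exactly $p+q$ vertices, exactly $\binom{p}{2}+q$ edges, and spectral radius $\rho(G)=\rho(Kite_{p,q})$; hence by Theorem 2.5, $\rho(G)>p-1+\tfrac{1}{p^{2}}+\tfrac{1}{p^{3}}>p-1$. Writing $r:=w(G)$, the graph $G$ contains no $K_{r+1}$, and $r\ge 2$ since $\rho(G)>0$ forces $G$ to have an edge. Applying Theorem 2.6 with $2m=p^{2}-p+2q$ then gives $\rho(G)\le\sqrt{(p^{2}-p+2q)\,\tfrac{r-1}{r}}$, and combining this with the previous bound yields $(p-1)^{2}<(p^{2}-p+2q)\,\tfrac{r-1}{r}$.

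The remaining step is elementary algebra: using $(p-1)^{2}\tfrac{r}{r-1}=(p-1)^{2}+\tfrac{(p-1)^{2}}{r-1}$ and simplifying turns the inequality into $\tfrac{(p-1)^{2}}{r-1}<p+2q-1$, i.e.\ $r>1+\tfrac{(p-1)^{2}}{p+2q-1}$. To finish I would verify the clean inequality $1+\tfrac{(p-1)^{2}}{p+2q-1}\ge p-2q$, which (since $p+2q-1>0$) is equivalent to $(p-1)^{2}\ge(p-2q-1)(p+2q-1)=(p-1)^{2}-4q^{2}$ and therefore holds for all $q\ge 1$. Thus $r>p-2q$, and since $r$ and $p-2q$ are integers this gives $w(G)=r\ge p-2q+1$.

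I do not foresee a genuine obstacle: all the analytic content is carried by the two cited estimates, and everything else is bookkeeping. The only points needing care are the legitimacy of dividing by $r-1$ (covered by the observation $r\ge 2$) and the degenerate range $p\le 2q$, where the asserted bound is trivially true and the chain of strict inequalities above is not needed; one could also retain the $\tfrac{1}{p^{2}}+\tfrac{1}{p^{3}}$ terms of Theorem 2.5 for a slightly stronger inequality, but they are superfluous for the stated conclusion.
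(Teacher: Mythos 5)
Your proof is correct and follows the same overall strategy as the paper: combine the edge count $m=\binom{p}{2}+q$ read off from cospectrality with the lower bound on $\rho(Kite_{p,q})$ from Theorem 2.5 and Nikiforov's bound from Theorem 2.6. Your execution is tighter in two respects, though. First, the paper verifies the decisive inequality $\sqrt{2m\tfrac{r-1}{r}}<p-1+\tfrac{1}{p^{2}}+\tfrac{1}{p^{3}}$ ``by the help of \emph{Mathematica}''; by discarding the superfluous $\tfrac{1}{p^{2}}+\tfrac{1}{p^{3}}$ and working with $\rho(G)>p-1$ alone, you reduce the whole verification to the hand-checkable identity $(p-2q-1)(p+2q-1)=(p-1)^{2}-4q^{2}\le(p-1)^{2}$. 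Second, the paper concludes that $G$ contains $K_{r+1}$ for every $r<p-2q$, which as literally stated only yields $w(G)\ge p-2q$; your version applies the contrapositive once with $r=w(G)$, obtains $w(G)>p-2q$ directly, and so the integrality step legitimately delivers the claimed $+1$. You also attend to two points the paper passes over: the trivial range $p\le 2q$, and the need to know $r\ge 2$ before dividing by $r-1$.
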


\begin{proof}

Since $G$ is cospectral with $Kite_{p,q}$, from Lemma 2.3, $G$ has the same number of vertices, same number of edges and same spectrum with $Kite_{p,q}$. So, if $G$ has $n$ vertices and $m$ edges, $n=p+q$ and $m=\left(                                       \begin{array}{c}
                                         p \\
                                         2 \\
                                       \end{array}
                                     \right)+q = \frac{p^{2}-p+2q}{2}$. Also, $\rho(G)=\rho(Kite_{p,q})$. From Theorem 2.6, we say that if $\mu>\sqrt{2m(\frac{r-1}{r})}$ then $G$ isn't $K_{r+1}-free$. It means that, $G$ contains $K_{r+1}$ as a subgraph. Now, we claim that for $r<p-2q$, $\sqrt{2m(\frac{r-1}{r})}<\rho(G)$. By Theorem 2.5, we've already known that $p-1+\frac{1}{p^{2}}+\frac{1}{p^{3}}<\rho(G)$. Hence, we need to show that, when $r<p-2q$, $\sqrt{2m(\frac{r-1}{r})}<p-1+\frac{1}{p^{2}}+\frac{1}{p^{3}}$. Indeed,

\begin{eqnarray*}
(\sqrt{2m(\frac{r-1}{r})})^{2}-(p-1+\frac{1}{p^{2}}+\frac{1}{p^{3}})^{2}
&=&(p^{2}-p+2q)(r-1)-r(p-1+\frac{1}{p^{2}}+\frac{1}{p^{3}})^{2} \\
&=&(p^{2}-p+2q)(r-1)- \\ & & (\frac{r(p^{2}+p^{3})}{p^{5}})(2(p-1)+\frac{(p^{2}+p^{3})}{p^{5}}) \\
&=&(pr-p^{2}+p+(2q-1)r-2q)-\\ & &(\frac{r(p^{2}+p^{3})}{p^{5}})(2(p-1)+\frac{(p^{2}+p^{3})}{p^{5}}) \\
\end{eqnarray*}

By the help of \textit{Mathematica}, for $r<p-2q$ we can see

 \[(pr-p^{2}+p+(2q-1)r-2q)-(\frac{r(p^{2}+p^{3})}{p^{5}})(2(p-1)+\frac{(p^{2}+p^{3})}{p^{5}})<0\]

i.e.

\[(\sqrt{2m(\frac{r-1}{r})})^{2}-(p-1+\frac{1}{p^{2}}+\frac{1}{p^{3}})^{2}<0\]

i.e.

\[(\sqrt{2m(\frac{r-1}{r})})^{2}<(p-1+\frac{1}{p^{2}}+\frac{1}{p^{3}})^{2}\]

Since $\sqrt{2m(\frac{r-1}{r})}>0$ and $p-1+\frac{1}{p^{2}}+\frac{1}{p^{3}}>0$, we get

\[\sqrt{2m(\frac{r-1}{r})}<p-1+\frac{1}{p^{2}}+\frac{1}{p^{3}}<\rho(G)\]

Thus, we proved our claim and so $G$ contains $K_{r+1}$ as a subgraph such that $r<p-2q$. Consequently, $w(G)\geq p-2q+1$.

\end{proof}

\begin{theorem} $Kite_{p,2}$ graphs are determined by their adjacency spectrum for all $p$.
\end{theorem}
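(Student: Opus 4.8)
The plan is to prove that any graph $G$ cospectral with $Kite_{p,2}$ contains $K_p$ as a subgraph, and then that the two vertices of $G$ outside this $K_p$ can be attached only in the one way that yields $Kite_{p,2}$.

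First I would record the invariants forced by cospectrality (Lemmas 2.2--2.4): $G$ has $n=p+2$ vertices, $m=\binom{p}{2}+2$ edges, $t(G)=\binom{p}{3}$ triangles, and $\rho(G)=\rho(Kite_{p,2})$, so in particular $\rho(G)>p-1$ by Theorem 2.5. Also $0$ is not a root of $P_{A(Kite_{p,2})}(\lambda)$ — it equals $p-1$ at $\lambda=0$ — so $G$ has no isolated vertex. For later use I would note that $Kite_{p,2}$ has exactly $3\binom{p}{4}$ four-cycles, and recall that $\mathrm{tr}(A^{4})=2\sum_v d_v^{2}-2m+8N_{G}(C_4)$, so that among graphs with the same number of edges and of $4$-cycles the spectrum determines $\sum_v d_v^{2}$.

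The main (and hardest) step is to show $w(G)=p$. The edge count gives $w(G)\le p$, since $K_{p+1}$ alone already has $\binom{p}{2}+p>m$ edges; Lemma 4.1 with $q=2$ gives $w(G)\ge p-3$. To rule out $w(G)=p-k$ for $k\in\{1,2,3\}$, fix a clique $W$ of size $p-k$ and set $S=V(G)\setminus W$, so $|S|=k+2$ and $G[W]=K_{p-k}$. No vertex of $S$ is adjacent to all of $W$ (otherwise $W$ together with that vertex would be a clique of size $p-k+1$), hence each $x\in S$ has at most $p-k-1$ neighbours in $W$. Now $m-\binom{p-k}{2}$ edges meet $S$ and $\binom{p}{3}-\binom{p-k}{3}$ triangles meet $S$; writing this triangle total in terms of the $W$-degrees of the vertices of $S$, the edges inside $S$, and the triangles inside $S$, and using the convexity of $t\mapsto\binom{t}{2}$ together with the cap $p-k-1$ on the $W$-degrees, one finds that the edges meeting $S$ cannot produce as many triangles as are required. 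The case $k=1$ is a short direct estimate; $k=2,3$ are analogous, with more variables, and — as in Lemma 4.1 — may require a computer check for the small $p$ where Lemma 4.1 itself is vacuous. Hence $w(G)=p$, so some $p$ vertices of $G$ induce a $K_p$.

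The rest is short. Since $m-\binom{p}{2}=2$, exactly two edges of $G$ lie outside this $K_p$, each incident with one of the two remaining vertices $x,y$. Enumerating the possibilities subject to $G$ having no isolated vertex leaves precisely three graphs: $Kite_{p,2}$; the graph $K_{p+2}^{2}$ obtained by attaching two pendant edges to one vertex of $K_p$; and the graph obtained by attaching one pendant edge to each of two distinct vertices of $K_p$. For both of the last two, every $4$-cycle lies inside $K_p$, so each has $3\binom{p}{4}$ of them, exactly as $Kite_{p,2}$ does; however $\sum_v d_v^{2}$ differs from that of $Kite_{p,2}$ (by $2p-2$ and $2p-4$, respectively), so $\mathrm{tr}(A^{4})$, hence the spectrum, differs — a contradiction. (For $K_{p+2}^{2}$ one may instead invoke Theorem 2.7: it is DAS but not isomorphic to $Kite_{p,2}$.) Hence $G\cong Kite_{p,2}$, so $Kite_{p,2}$ is determined by its adjacency spectrum.
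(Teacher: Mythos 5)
Your outline matches the paper's strategy almost exactly in its skeleton: force $n$, $m$, $t(G)$ and $\rho(G)$ from the spectrum, use Lemma 4.1 to get $w(G)\ge p-3$ and the edge count to get $w(G)\le p$, kill the intermediate values $w(G)=p-1,p-2,p-3$ by counting edges and triangles that meet the set $S$ of vertices outside a maximum clique, and finally sort out the three graphs consisting of $K_p$ plus two outside edges. Two comments. First, where you genuinely diverge is the endgame: the paper eliminates $K_{p+2}^{2}$ by citing its DAS property (Theorem 2.7) and eliminates the ``two separate pendant edges'' graph $G_c$ by an explicit characteristic-polynomial computation via Lemma 2.1, whereas you observe that all three candidates have the same $m$ and the same number of $4$-cycles ($3\binom{p}{4}$, since every cycle lives inside the clique), so $\mathrm{tr}(A^{4})=2\sum_v d_v^{2}-2m+8N_G(C_4)$ reduces the comparison to $\sum_v d_v^{2}$, which differs by $2p-2$ and $2p-4$ respectively. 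This is correct (I checked the degree sums) and is cleaner and more uniform than the paper's treatment; it also avoids the appeal to an external DAS result. Second, be aware that the step you compress into ``one finds that the edges meeting $S$ cannot produce as many triangles as are required'' is where essentially all of the paper's labour sits: Case $w(G)=p-3$ alone splits into eleven subcases according to the number $d$ of edges inside $S$, and several of the resulting upper bounds beat the required triangle count $\binom{p}{3}-\binom{p-k}{3}$ by a margin of only $1$ or $2$ (e.g.\ $\tfrac{3p^2}{2}-\tfrac{15p}{2}+8$ versus $\tfrac{3p^2}{2}-\tfrac{15p}{2}+10$), with separate ad hoc arguments for $p=5,6,7$. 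So the convexity-plus-degree-cap heuristic you invoke is the right one, but it does not close these cases without tracking $d$, the common-neighbour counts $x_{i\wedge j}$, and the triangles internal to $S$ exactly as the paper does; as written, this part of your proposal is a plan rather than a proof. You should also state up front that $p\le 3$ is handled separately ($P_3$, $P_4$ and the lollipop $H_{5,3}$ are already known to be DAS), since the clique-counting framework degenerates there.
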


\begin{proof}

If $p=1$ or $p=2$, $Kite_{p,2}$ graphs are actually the path graphs $P_{3}$ or $P_{4}$. Also if $p=3$, then we obtain the lollipop graph $H_{5,3}$. As is known, these graphs are already \textit{DAS} \cite{3}. Hence we will continue our proof for $p\geq4$. For a given graph $G$ with $n$ vertices and $m$ edges, assume that $G$ is cospectral with $Kite_{p,2}$. Then by Lemma 2.3 and Lemma 2.4, $n=p+2$, $m=\left(
                                       \begin{array}{c}
                                         p \\
                                         2 \\
                                       \end{array}
                                     \right)+2 = \frac{p^{2}-p+4}{2}$ and $t(G)=t(Kite_{p,2})=\left(
                                       \begin{array}{c}
                                         p \\
                                         3 \\
                                       \end{array}
                                     \right)=\frac{p^{3}-3p^{2}+2p}{6}$. From Lemma 3.2.1, $w(G)\geq p-2q+1$. When $q=2$,  $w(G)\geq p-3=n-5$. It's well-known that complete graph $K_{n}$ is \textit{DS}. So $w(G)\neq n$. If $w(G)= n-1=p+1$, then $G$ contains at least one clique with size $p-1$. It means that the edge number of $G$ is greater than or equal to $\left(
                                       \begin{array}{c}
                                         p+1 \\
                                         2 \\
                                       \end{array}
                                     \right)$. But it is a contradiction since $\left(
                                       \begin{array}{c}
                                         p+1 \\
                                         2 \\
                                       \end{array}
                                     \right)>\left(
                                       \begin{array}{c}
                                         p \\
                                         2 \\
                                       \end{array}
                                     \right)+2=m$. Hence, $w(G)\neq n-1$. Because of these, $n-5\leq w(G)\leq n-2$. Let us investigate the three cases, respectively, $w(G)= n-5$, $w(G)= n-4$, $w(G)= n-3$.

\textbf{CASE 1 :} Let $w(G)= n-5$. Then $w(G)= p-3$. So, $G$ contains at least one clique with size $p-3$. This clique is denoted by $K_{p-3}$. Let us label the five vertices, respectively, with $1,2,3,4,5$ which are not in the clique $K_{p-3}$ and call the set of these five vertices with $A=\{1,2,3,4,5\}$. We demonstrate this case by the following figure.

\begin{figure}[htbp] {}
\centering
\includegraphics[width=2.3cm,angle=0,height=2.3cm]{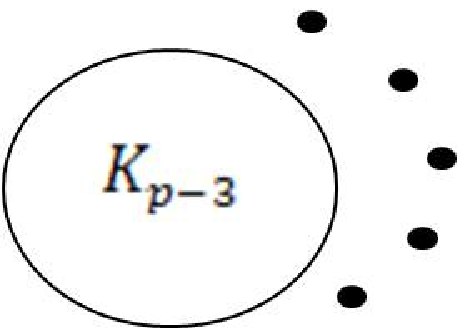}
\caption{}
\end{figure}

For $i\in A$, $x_{i}$ denotes the number of adjacent vertices of $i$ in $K_{p-3}$. By the fact that $w(G)=p-3$, for all $i\in A$ we say \begin{equation}x_{i}\leq p-4\end{equation} Also, $x_{i\wedge j}$ denotes the number of common adjacent vertices in $K_{p-3}$ of $i$ and $j$ such that $i,j\in A$ and $i<j$. Similarly, if $i\sim j$ then \begin{equation}x_{i\wedge j}\leq p-5\end{equation} Moreover, $d$ denotes the number of edges between the vertices of $A$ and $\alpha$ denotes the number of cliques with size 3 which are composed by vertices of $A$.

First of all, since the number of edges of $G$ is equal to $m$,

$m=\left(                                       \begin{array}{c}
                                         p \\
                                         2 \\
                                       \end{array}
                                     \right)+2 =\left(
                                       \begin{array}{c}
                                         p-3 \\
                                         2 \\
                                       \end{array}
                                     \right)+\sum_{i=1}^{5}x_{i}+d $. It follows that
                                     \begin{equation}\sum_{i=1}^{5}x_{i}+d= \left(                                       \begin{array}{c}
                                         p \\
                                         2 \\
                                       \end{array}
                                     \right)+2-\left(
                                       \begin{array}{c}
                                         p-3 \\
                                         2 \\
                                       \end{array}
                                     \right)=3p-4\end{equation} Similarly, by using $t(G)=\left(                                       \begin{array}{c}
                                         p \\
                                         3 \\
                                       \end{array}
                                     \right)$, we get

                                     $\left(                                       \begin{array}{c}
                                         p \\
                                         3 \\
                                       \end{array}
                                     \right)=\left(                                       \begin{array}{c}
                                         p-3 \\
                                         3 \\
                                       \end{array}
                                     \right)+\sum_{i=1}^{5}\left(                                       \begin{array}{c}
                                         x_{i} \\
                                         2 \\
                                       \end{array}
                                     \right)+\sum_{i\sim j}x_{i\wedge j}+\alpha$. Hence, we have

                                     \begin{equation}\sum_{i=1}^{5}\left(                                       \begin{array}{c}
                                         x_{i} \\
                                         2 \\
                                       \end{array}
                                     \right)+\sum_{i\sim j}x_{i\wedge j}+\alpha= \left(                                       \begin{array}{c}
                                         p \\
                                         3 \\
                                       \end{array}
                                     \right)-\left(                                       \begin{array}{c}
                                         p-3 \\
                                         3 \\
                                       \end{array}
                                     \right)=\frac{3p^{2}}{2}-\frac{15p}{2}+10\end{equation}

If $p=4$, then $w(G)=n-5=p-3=1$. Clearly, this is contradiction. Also if $p=5$, then $w(G)=n-5=p-3=2$ which implies $t(G)=0$. Again this is a contradiction. For this reason, we will continue for $p\geq6$.

Clearly, $0\leq d\leq10$. So, we will investigate the cases of $d$.
\vspace{3mm}

\textit{Subcase 1}

Let $d=0$. Then, $\sum_{i\sim j}x_{i\wedge j}+\alpha=0$ and from (3), we have \begin{equation}\sum_{i=1}^{5}x_{i}=3p-4\end{equation} Hence, we get

\begin{equation*}\sum_{i=1}^{5}\left(                                       \begin{array}{c}
                                         x_{i} \\
                                         2 \\
                                       \end{array}
                                     \right)+\sum_{i\sim j}x_{i\wedge j}+\alpha=\sum_{i=1}^{5}\left(                                       \begin{array}{c}
                                         x_{i} \\
                                         2 \\
                                       \end{array}
                                     \right)\end{equation*} Clearly, \begin{equation*}\sum_{i=1}^{5}\left(                                       \begin{array}{c}
                                         x_{i} \\
                                         2 \\
                                       \end{array}
                                     \right)\leq max\{\sum_{i=1}^{5}\left(                                       \begin{array}{c}
                                         x_{i} \\
                                         2 \\
                                       \end{array}
                                     \right)\}\end{equation*} Since, the spectrum of $G$ does not contain zero, $G$ has not an isolated vertex. So, from this fact and (1), we get $1\leq x_{i}\leq p-4$ for all $i\in A$. Hence, by (5), we get

                                     \begin{eqnarray} \sum_{i=1}^{5}\left(                                       \begin{array}{c}
                                         x_{i} \\
                                         2 \\
                                       \end{array}
                                     \right) &\leq&  max\{\sum_{i=1}^{5}\left(                                       \begin{array}{c}
                                         x_{i} \\
                                         2 \\
                                       \end{array}
                                     \right)\} \notag \\ &\leq& 3\left(                                       \begin{array}{c}
                                         p-4 \\
                                         2 \\
                                       \end{array}
                                     \right) + \left(                                       \begin{array}{c}
                                         7 \\
                                         2 \\
                                       \end{array}
                                     \right)\notag \\ &=& \frac{3p^{2}}{2}-\frac{27p}{2}+51\end{eqnarray}

From (1) and (5), $3p-4\leq5(p-4)$ which implies $8\leq p$. Where $8\leq p$, \begin{equation}\frac{3p^{2}}{2}-\frac{27p}{2}+51<\frac{3p^{2}}{2}-\frac{15p}{2}+10 \end{equation} This means that, $\sum_{i=1}^{5}\left(                                       \begin{array}{c}
                                         x_{i} \\
                                         2 \\
                                       \end{array}
                                     \right)<\frac{3p^{2}}{2}-\frac{15p}{2}+10$. But this result contradicts with (4).

\textit{Subcase 2}

Let $d=1$. Then $\alpha=0$ and from (3) we get \begin{equation}\sum_{i=1}^{5}x_{i}=3p-5\end{equation} Since $d=1$ and by (2), $\sum_{i\sim j}x_{i\wedge j}\leq p-5$. From here and (1), we have \begin{eqnarray} \sum_{i=1}^{5}\left(                                       \begin{array}{c}
                                         x_{i} \\
                                         2 \\
                                       \end{array}
                                     \right)+\sum_{i\sim j}x_{i\wedge j}+\alpha &\leq& max\{\sum_{i=1}^{5}\left(                                       \begin{array}{c}
                                         x_{i} \\
                                         2 \\
                                       \end{array}
                                     \right)\}+p-5 \notag \\ &\leq& 3\left(                                       \begin{array}{c}
                                         p-4 \\
                                         2 \\
                                       \end{array}
                                     \right) + \left(                                       \begin{array}{c}
                                         7 \\
                                         2 \\
                                       \end{array}
                                     \right)+p-5 \notag \\
                                     &=& \frac{3p^{2}}{2}-\frac{25p}{2}+46\end{eqnarray}

By using (8) and (1), we obtain $3p-5\leq5(p-4)$ which implies $15\leq2p$. Since $p$ is an integer, $8\leq p$. Where  $8\leq p$, \begin{equation}\frac{3p^{2}}{2}-\frac{25p}{2}+46<\frac{3p^{2}}{2}-\frac{15p}{2}+10\end{equation}
This means that, $\sum_{i=1}^{5}\left(                                       \begin{array}{c}
                                         x_{i} \\
                                         2 \\
                                       \end{array}
                                     \right)+\sum_{i\sim j}x_{i\wedge j}+\alpha<\frac{3p^{2}}{2}-\frac{15p}{2}+10$. But this result contradicts with (4).

\textit{Subcase 3}

Let $d=2$. Then $\alpha=0$ and by (3), we get \begin{equation}\sum_{i=1}^{5}x_{i}=3p-6\end{equation} By using similar way with last subcase, we obtain

\begin{eqnarray} \sum_{i=1}^{5}\left(                                       \begin{array}{c}
                                         x_{i} \\
                                         2 \\
                                       \end{array}
                                     \right)+\sum_{i\sim j}x_{i\wedge j}+\alpha &\leq& 3\left(                                       \begin{array}{c}
                                         p-4 \\
                                         2 \\
                                       \end{array}
                                     \right) + \left(                                       \begin{array}{c}
                                         6 \\
                                         2 \\
                                       \end{array}
                                     \right)+2(p-5) \notag \\ &=&\frac{3p^{2}}{2}-\frac{23p}{2}+35\end{eqnarray}

and $7\leq p$. If  $7\leq p$, we have \begin{equation}\frac{3p^{2}}{2}-\frac{23p}{2}+35<\frac{3p^{2}}{2}-\frac{15p}{2}+10\end{equation}
By (12) and (13), we get $\sum_{i=1}^{5}\left(                                       \begin{array}{c}
                                         x_{i} \\
                                         2 \\
                                       \end{array}
                                     \right)+\sum_{i\sim j}x_{i\wedge j}+\alpha<\frac{3p^{2}}{2}-\frac{15p}{2}+10$. This result contradicts with (4) as in Subcase 2.

\textit{Subcase 4}

Let $d=3$. Then $max\{\alpha\}=1$ and \begin{equation*}\sum_{i=1}^{5}x_{i}=3p-7\end{equation*} By using similar way again, we obtain

\begin{eqnarray} \sum_{i=1}^{5}\left(                                       \begin{array}{c}
                                         x_{i} \\
                                         2 \\
                                       \end{array}
                                     \right)+\sum_{i\sim j}x_{i\wedge j}+\alpha &\leq& 3\left(                                       \begin{array}{c}
                                         p-4 \\
                                         2 \\
                                       \end{array}
                                     \right) + \left(                                       \begin{array}{c}
                                         5 \\
                                         2 \\
                                       \end{array}
                                     \right)+3(p-5)+1 \notag \\ &=& \frac{3p^{2}}{2}-\frac{21p}{2}+26\end{eqnarray}

Since  $p\geq6$, we have \begin{equation}\frac{3p^{2}}{2}-\frac{21p}{2}+26<\frac{3p^{2}}{2}-\frac{15p}{2}+10\end{equation}
By (14) and (15), we have $\sum_{i=1}^{5}\left(                                       \begin{array}{c}
                                         x_{i} \\
                                         2 \\
                                       \end{array}
                                     \right)+\sum_{i\sim j}x_{i\wedge j}+\alpha<\frac{3p^{2}}{2}-\frac{15p}{2}+10$. We get same contradiction with (4).

\textit{Subcase 5}

Let $d=4$.Then $max\{\alpha\}=1$ and $\sum_{i=1}^{5}x_{i}=3p-8$. Similarly, we obtain

\begin{eqnarray} \sum_{i=1}^{5}\left(                                       \begin{array}{c}
                                         x_{i} \\
                                         2 \\
                                       \end{array}
                                     \right)+\sum_{i\sim j}x_{i\wedge j}+\alpha &\leq& 3\left(                                       \begin{array}{c}
                                         p-4 \\
                                         2 \\
                                       \end{array}
                                     \right) + \left(                                       \begin{array}{c}
                                         4 \\
                                         2 \\
                                       \end{array}
                                     \right)+4(p-5)+1 \notag \\ &=& \frac{3p^{2}}{2}-\frac{19p}{2}+17\end{eqnarray}

Since  $p\geq6$, we have \begin{equation}\frac{3p^{2}}{2}-\frac{19p}{2}+17<\frac{3p^{2}}{2}-\frac{15p}{2}+10\end{equation}

By (16) and (17), we get same contradiction with (4).

\textit{Subcase 6}

Let $d=5$.Then $max\{\alpha\}=2$ and $\sum_{i=1}^{5}x_{i}=3p-9$. Similarly, we obtain

\begin{eqnarray} \sum_{i=1}^{5}\left(                                       \begin{array}{c}
                                         x_{i} \\
                                         2 \\
                                       \end{array}
                                     \right)+\sum_{i\sim j}x_{i\wedge j}+\alpha &\leq& 3\left(                                       \begin{array}{c}
                                         p-4 \\
                                         2 \\
                                       \end{array}
                                     \right) + \left(                                       \begin{array}{c}
                                         3 \\
                                         2 \\
                                       \end{array}
                                     \right)+5(p-5)+2 \notag \\ &=&\frac{3p^{2}}{2}-\frac{17p}{2}+10\end{eqnarray}

Since  $p\geq6$, we get \begin{equation}\frac{3p^{2}}{2}-\frac{17p}{2}+10<\frac{3p^{2}}{2}-\frac{15p}{2}+10\end{equation}

By (18) and (19), we get same contradiction with (4).

\textit{Subcase 7}

Let $d=6$.Then $max\{\alpha\}=4$ and $\sum_{i=1}^{5}x_{i}=3p-10$. Similarly, we obtain

\begin{eqnarray} \sum_{i=1}^{5}\left(                                       \begin{array}{c}
                                         x_{i} \\
                                         2 \\
                                       \end{array}
                                     \right)+\sum_{i\sim j}x_{i\wedge j}+\alpha &\leq& 3\left(                                       \begin{array}{c}
                                         p-4 \\
                                         2 \\
                                       \end{array}
                                     \right) + \left(                                       \begin{array}{c}
                                         2 \\
                                         2 \\
                                       \end{array}
                                     \right)+6(p-5)+4 \notag \\ &=&\frac{3p^{2}}{2}-\frac{15p}{2}+5\end{eqnarray}

Since  $p\geq6$, we have \begin{equation}\frac{3p^{2}}{2}-\frac{15p}{2}+5<\frac{3p^{2}}{2}-\frac{15p}{2}+10\end{equation}

By (20) and (21), we get same contradiction with (4).

\textit{Subcase 8}

Let $d=7$.Then $max\{\alpha\}=4$ and $\sum_{i=1}^{5}x_{i}=3p-11$. Also here,  \begin{equation*}\sum_{i\sim j}x_{i\wedge j}\leq  \sum_{i=1}^{5}x_{i}+2(p-5)=5p-21\end{equation*}

Hence, in the same way as former subcases, we obtain

\begin{eqnarray} \sum_{i=1}^{5}\left(                                       \begin{array}{c}
                                         x_{i} \\
                                         2 \\
                                       \end{array}
                                     \right)+\sum_{i\sim j}x_{i\wedge j}+\alpha &\leq& 3\left(                                       \begin{array}{c}
                                         p-4 \\
                                         2 \\
                                       \end{array}
                                     \right) + 5p-21+4 \notag \\ &=& \frac{3p^{2}}{2}-\frac{17p}{2}+13\end{eqnarray}

Since  $p\geq6$, we get \begin{equation}\frac{3p^{2}}{2}-\frac{17p}{2}+13<\frac{3p^{2}}{2}-\frac{15p}{2}+10\end{equation}

So, by (22) and (23), we get same contradiction with (4).

\textit{Subcase 9}

Let $d=8$. Then $max\{\alpha\}=5$ and $\sum_{i=1}^{5}x_{i}=3p-12$. Such as in the last subcase, we get \begin{equation*}\sum_{i\sim j}x_{i\wedge j}\leq  \sum_{i=1}^{5}x_{i}+3(p-5)=6p-27\end{equation*}

Hence, we obtain

\begin{eqnarray} \sum_{i=1}^{5}\left(                                       \begin{array}{c}
                                         x_{i} \\
                                         2 \\
                                       \end{array}
                                     \right)+\sum_{i\sim j}x_{i\wedge j}+\alpha &\leq& 3\left(                                       \begin{array}{c}
                                         p-4 \\
                                         2 \\
                                       \end{array}
                                     \right) + 6p-27+5 \notag \\ &=& \frac{3p^{2}}{2}-\frac{15p}{2}+8\end{eqnarray}

Since  $p\geq6$, we get \begin{equation}\frac{3p^{2}}{2}-\frac{15p}{2}+8<\frac{3p^{2}}{2}-\frac{15p}{2}+10\end{equation}

So, by (24) and (25), we get same contradiction with (4).

\textit{Subcase 10}

Let $d=9$. Then $max\{\alpha\}=7$ and $\sum_{i=1}^{5}x_{i}=3p-13$. Similarly, we get \begin{equation*}\sum_{i\sim j}x_{i\wedge j}\leq  \sum_{i=1}^{5}x_{i}+4(p-5)=7p-33\end{equation*}
Hence, we obtain

\begin{eqnarray} \sum_{i=1}^{5}\left(                                       \begin{array}{c}
                                         x_{i} \\
                                         2 \\
                                       \end{array}
                                     \right)+\sum_{i\sim j}x_{i\wedge j}+\alpha &\leq& 2\left(                                       \begin{array}{c}
                                         p-4 \\
                                         2 \\
                                       \end{array}
                                     \right) + \left(                                       \begin{array}{c}
                                         p-5 \\
                                         2 \\
                                       \end{array}
                                     \right)+ 7p-33+7 \notag \\ &=& \frac{3p^{2}}{2}-\frac{15p}{2}+9\end{eqnarray}

Clearly, if  $p\geq6$, then \begin{equation}\frac{3p^{2}}{2}-\frac{15p}{2}+9<\frac{3p^{2}}{2}-\frac{15p}{2}+10\end{equation}

By (26) and (27), we get same contradiction with (4).

\textit{Subcase 11}

Let $d=10$. Then $max\{\alpha\}=10$ and we get \begin{equation*}\sum_{i=1}^{5}x_{i}=3p-14\end{equation*} Also, we have \begin{equation*}\sum_{i\sim j}x_{i\wedge j}\leq  2(\sum_{i=1}^{5}x_{i})=6p-28\end{equation*}  Thus, we obtain

\begin{eqnarray} \sum_{i=1}^{5}\left(                                       \begin{array}{c}
                                         x_{i} \\
                                         2 \\
                                       \end{array}
                                     \right)+\sum_{i\sim j}x_{i\wedge j}+\alpha &\leq& 2\left(                                       \begin{array}{c}
                                         p-4 \\
                                         2 \\
                                       \end{array}
                                     \right) + \left(                                       \begin{array}{c}
                                         p-6 \\
                                         2 \\
                                       \end{array}
                                     \right)+ 6p-28+10 \notag \\ &=& \frac{3p^{2}}{2}-\frac{19p}{2}+23\end{eqnarray}

If $p=6$, then $\sum_{i=1}^{5}x_{i}=4$. It follows that $\exists i\in A,x_{i}=0$ and so $\forall i,j \in A$ , $i\sim j$. By using the fact that $\exists i\in A$ , $x_{i}$=0, we get  \begin{equation*}\sum_{i\sim j}x_{i\wedge j} \leq  6(p-5)=6\end{equation*} and \begin{eqnarray*}\sum_{i=1}^{5}\left(                                       \begin{array}{c}
                                         x_{i} \\
                                         2 \\
                                       \end{array}
                                     \right)+\sum_{i\sim j}x_{i\wedge j}+\alpha &\leq& 2\left(                                       \begin{array}{c}
                                         2 \\
                                         2 \\
                                       \end{array}
                                     \right) +6+10 \notag \\ &=&18\end{eqnarray*} From (4), we get $\frac{3p^{2}}{2}-\frac{15p}{2}+10=19$. Thus,  \begin{equation}\sum_{i=1}^{5}\left(                                       \begin{array}{c}
                                         x_{i} \\
                                         2 \\
                                       \end{array}
                                     \right)+\sum_{i\sim j}x_{i\wedge j}+\alpha<\frac{3p^{2}}{2}-\frac{15p}{2}+10\end{equation}

If $p\geq7$, then \begin{equation}\frac{3p^{2}}{2}-\frac{19p}{2}+23<\frac{3p^{2}}{2}-\frac{15p}{2}+10\end{equation}

By (28),(29) and (30), we have contradiction with (4). \\

From Subcase 1 to Subcase 11,  $w(G)\neq n-5$.

\textbf{CASE 2:} Let $w(G)= n-4$. Then $w(G)= p-2$. So, $G$ contains at least one clique with size $p-2$.  We use similar notations with Case 1.

\begin{figure}[htbp] {}
\centering
\includegraphics[width=2.3cm,angle=0,height=2.3cm]{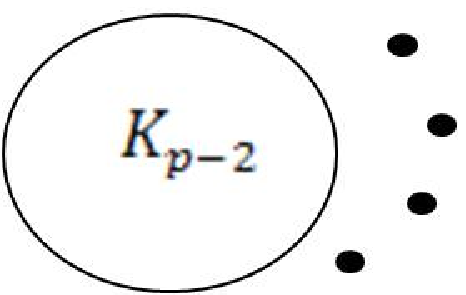}
\caption{}
\end{figure}

By the fact that $w(G)=p-2$, for all $i\in B$ we say $x_{i}\leq p-3$ such that $B=\{1,2,3,4\}$. Also,when $i\sim j$, $x_{i\wedge j}\leq p-5$ such that $i,j\in B$ and $i<j$. Since the number of edges of $G$ is equal to $m$, we get,

\begin{equation*}m=\left(                                       \begin{array}{c}
                                         p \\
                                         2 \\
                                       \end{array}
                                     \right)+2 =\left(
                                       \begin{array}{c}
                                         p-2 \\
                                         2 \\
                                       \end{array}
                                     \right)+\sum_{i=1}^{4}x_{i}+d \end{equation*} It follows that
                                     \begin{equation}\sum_{i=1}^{4}x_{i}+d= \left(                                       \begin{array}{c}
                                         p \\
                                         2 \\
                                       \end{array}
                                     \right)+2-\left(
                                       \begin{array}{c}
                                         p-2 \\
                                         2 \\
                                       \end{array}
                                     \right)=2p-1\end{equation} Also, from $t(G)=t(Kite_{p,2})$, we get

                                       \begin{equation*}\left(                                       \begin{array}{c}
                                         p \\
                                         3 \\
                                       \end{array}
                                     \right)=\left(                                       \begin{array}{c}
                                         p-2 \\
                                         3 \\
                                       \end{array}
                                     \right)+\sum_{i=1}^{4}\left(                                       \begin{array}{c}
                                         x_{i} \\
                                         2 \\
                                       \end{array}
                                     \right)+\sum_{i\sim j}x_{i\wedge j}+\alpha\end{equation*} Hence, we have

                                     \begin{eqnarray}\sum_{i=1}^{4}\left(                                       \begin{array}{c}
                                         x_{i} \\
                                         2 \\
                                       \end{array}
                                     \right)+\sum_{i\sim j}x_{i\wedge j}+\alpha &=& \left(                                       \begin{array}{c}
                                         p \\
                                         3 \\
                                       \end{array}
                                     \right)-\left(                                       \begin{array}{c}
                                         p-2 \\
                                         3 \\
                                       \end{array}
                                     \right) \notag \\ &=&(p-2)^{2} \notag \\ &=&p^{2}-4p+4\end{eqnarray}

If $p=4$, then $w(G)=n-4=p-2=2$. This means that, $t(G)=0$. But it contradicts with $t(G)=t(Kite_{4,2})=4$. So, we will continue to investigate for $p\geq5$. Obviously, in this case $0\leq d\leq6$.

\textit{Subcase 1}

Let $d=0$. Then, $\sum_{i\sim j}x_{i\wedge j}+\alpha=0$ and \begin{equation}\sum_{i=1}^{4}x_{i}=2p-1\end{equation}

Clearly, \begin{equation}\sum_{i=1}^{4}\left(                                       \begin{array}{c}
                                         x_{i} \\
                                         2 \\
                                       \end{array}
                                     \right)\leq max\{\sum_{i=1}^{5}\left(                                       \begin{array}{c}
                                         x_{i} \\
                                         2 \\
                                       \end{array}
                                     \right)\}\end{equation}

Since $G$ does not contain an isolated vertex, $1\leq x_{i}\leq p-3$ for all $i\in B$. Hence, by (33) and(34), we get

                                     \begin{eqnarray} \sum_{i=1}^{4}\left(                                       \begin{array}{c}
                                         x_{i} \\
                                         2 \\
                                       \end{array}
                                     \right)+\sum_{i\sim j}x_{i\wedge j}+\alpha &\leq&  max\{\sum_{i=1}^{4}\left(                                       \begin{array}{c}
                                         x_{i} \\
                                         2 \\
                                       \end{array}
                                     \right)\} \notag \\ &\leq& 2\left(                                       \begin{array}{c}
                                         p-3 \\
                                         2 \\
                                       \end{array}
                                     \right) + \left(                                       \begin{array}{c}
                                         4 \\
                                         2 \\
                                       \end{array}
                                     \right) \notag \\ &=&p^{2}-7p+18\end{eqnarray}

Clearly, for $p\geq5$,  \begin{equation}p^{2}-7p+18<p^{2}-4p+4\end{equation} By (35) and (36), we get \begin{equation*}\sum_{i=1}^{4}\left(                                       \begin{array}{c}
                                         x_{i} \\
                                         2 \\
                                       \end{array}
                                     \right)+\sum_{i\sim j}x_{i\wedge j}+\alpha<p^{2}-4p+4\end{equation*} But this result contradicts with (32).

\textit{Subcase 2}

Let $d=1$. Then, $\alpha=0$ and $\sum_{i=1}^{4}x_{i}=2p-2$. If $p=5$, then $\sum_{i=1}^{4}x_{i}=8$. So for all $i\in B$, $x_{i}=2$. Since $d=1$, we get $\sum_{i\sim j}x_{i\wedge j}=1$. Hence, $\sum_{i=1}^{4}\left(                                       \begin{array}{c}
                                         x_{i} \\
                                         2 \\
                                       \end{array}
                                     \right)+\sum_{i\sim j}x_{i\wedge j}+\alpha=5$ but from (33) $\sum_{i=1}^{4}\left(                                       \begin{array}{c}
                                         x_{i} \\
                                         2 \\
                                       \end{array}
                                     \right)+\sum_{i\sim j}x_{i\wedge j}+\alpha=9$. Because of this contradiction, $p\neq5$.

Also, we obtain             \begin{eqnarray} \sum_{i=1}^{4}\left(                                       \begin{array}{c}
                                         x_{i} \\
                                         2 \\
                                       \end{array}
                                     \right)+\sum_{i\sim j}x_{i\wedge j}+\alpha &\leq&  max\{\sum_{i=1}^{4}\left(                                       \begin{array}{c}
                                         x_{i} \\
                                         2 \\
                                       \end{array}
                                     \right)\}+p-4 \notag \\ &\leq& 2\left(                                       \begin{array}{c}
                                         p-3 \\
                                         2 \\
                                       \end{array}
                                     \right) + \left(                                       \begin{array}{c}
                                         4 \\
                                         2 \\
                                       \end{array}
                                     \right)+p-4 \notag \\ &=& p^{2}-6p+14\end{eqnarray}

If $p\geq6$, then  \begin{equation}p^{2}-6p+14<p^{2}-4p+4\end{equation} By (37) and (38), we contradict with (32).

\textit{Subcase 3}

Let $d=2$. Then, $\alpha=0$ and $\sum_{i=1}^{4}x_{i}=2p-3$. Also,  $\sum_{i\sim j}x_{i\wedge j}\leq2(p-4)$. Hence, as in last subcase, we obtain

\begin{eqnarray} \sum_{i=1}^{4}\left(                                       \begin{array}{c}
                                         x_{i} \\
                                         2 \\
                                       \end{array}
                                     \right)+\sum_{i\sim j}x_{i\wedge j}+\alpha &\leq& 2\left(                                       \begin{array}{c}
                                         p-3 \\
                                         2 \\
                                       \end{array}
                                     \right) + \left(                                       \begin{array}{c}
                                         3 \\
                                         2 \\
                                       \end{array}
                                     \right)+2p-8 \notag \\ &=& p^{2}-5p+7\end{eqnarray}

If $p\geq5$, then \begin{equation}p^{2}-5p+7<p^{2}-4p+4\end{equation} By (39) and (40), we contradict with (32).

\textit{Subcase 4}

Let $d=3$. Then, $max\{\alpha\}=1$, $\sum_{i=1}^{4}x_{i}=2p-4$ and $\sum_{i\sim j}x_{i\wedge j}\leq3(p-4)$. So, we get

\begin{eqnarray} \sum_{i=1}^{4}\left(                                       \begin{array}{c}
                                         x_{i} \\
                                         2 \\
                                       \end{array}
                                     \right)+\sum_{i\sim j}x_{i\wedge j}+\alpha &\leq& 2\left(                                       \begin{array}{c}
                                         p-3 \\
                                         2 \\
                                       \end{array}
                                     \right) + \left(                                       \begin{array}{c}
                                         2 \\
                                         2 \\
                                       \end{array}
                                     \right)+3p-12+1 \notag \\ &=& p^{2}-4p+2\end{eqnarray}

For $p\geq5$,  \begin{equation}p^{2}-4p+2<p^{2}-4p+4\end{equation} Again, we contradict with (32).

\textit{Subcase 5}

Let $d=4$. Then, $max\{\alpha\}=1$, $\sum_{i=1}^{4}x_{i}=2p-5$ and $\sum_{i\sim j}x_{i\wedge j}\leq \sum_{i=1}^{4}x_{i}=2p-5$. Hence, we get

\begin{eqnarray} \sum_{i=1}^{4}\left(                                       \begin{array}{c}
                                         x_{i} \\
                                         2 \\
                                       \end{array}
                                     \right)+\sum_{i\sim j}x_{i\wedge j}+\alpha &\leq& 2\left(                                       \begin{array}{c}
                                         p-3 \\
                                         2 \\
                                       \end{array}
                                     \right) + 2p-5+1 \notag \\ &=&p^{2}-5p+8\end{eqnarray}

For $p\geq5$,  \begin{equation}p^{2}-5p+8<p^{2}-4p+4\end{equation} Again, we contradict with (32).

\textit{Subcase 6}

Let $d=5$. Then, $max\{\alpha\}=2$ and $\sum_{i=1}^{4}x_{i}=2p-6$. Since $x_{i}\leq p-3$ and $\sum_{i=1}^{4}x_{i}=2p-6$, at most for one pair of adjacent vertices of $B$, $x_{i\wedge j}$ could be equal to $p-4$. Except of this vertex pair, $x_{i\wedge j}<p-4$. So, $\sum_{i\sim j}x_{i\wedge j}\leq \sum_{i=1}^{4}x_{i}+p-5$. Hence, we have

\begin{eqnarray} \sum_{i=1}^{4}\left(                                       \begin{array}{c}
                                         x_{i} \\
                                         2 \\
                                       \end{array}
                                     \right)+\sum_{i\sim j}x_{i\wedge j}+\alpha &\leq& 2\left(                                       \begin{array}{c}
                                         p-3 \\
                                         2 \\
                                       \end{array}
                                     \right) + 2p-6+p-5+2 \notag \\ &=& p^{2}-4p+3\end{eqnarray}

Clearly, for $p\geq5$,  \begin{equation}p^{2}-4p+3<p^{2}-4p+4\end{equation} By (45) and (46), we contradict with (32).

\textit{Subcase 7}

Let $d=6$. Then, $max\{\alpha\}=4$ and $\sum_{i=1}^{4}x_{i}=2p-7$. Same as last subcase, we get \begin{equation*}\sum_{i\sim j}x_{i\wedge j}\leq \sum_{i=1}^{4}x_{i}+2(p-5)=4p-17\end{equation*} Hence, we obtain

\begin{eqnarray} \sum_{i=1}^{4}\left(                                       \begin{array}{c}
                                         x_{i} \\
                                         2 \\
                                       \end{array}
                                     \right)+\sum_{i\sim j}x_{i\wedge j}+\alpha\leq \left(                                       \begin{array}{c}
                                         p-3 \\
                                         2 \\
                                       \end{array}
                                     \right) + \left(                                       \begin{array}{c}
                                         p-4 \\
                                         2 \\
                                       \end{array}
                                     \right)+4p-17+4=p^{2}-4p+3\end{eqnarray}
While $p\geq5$, we get  \begin{equation}p^{2}-4p+3<p^{2}-4p+4\end{equation} By (47) and (48), we contradict with (32).

Thus we have seen the same result with Case 1, that is $t(G)<t(Kite_{p,2})$. So, that is the same contradiction. Consequently, $w(G)\neq n-4$.

\textbf{CASE 3:} Let $w(G)= n-3= p-1$. So, $G$ contains at least one clique with size $p-1$.  We use similar notations with Case 1 and Case 2.

\begin{figure}[htbp] {}
\centering
\includegraphics[width=2.3cm,angle=0,height=2.3cm]{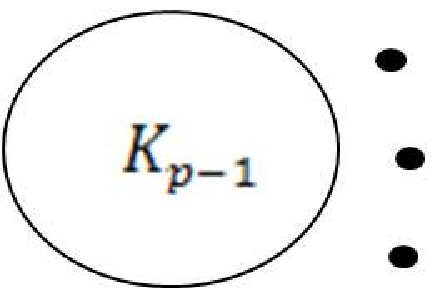}
\caption{}
\end{figure}

Since $w(G)=p-1$, for all $i\in C$, $x_{i}\leq p-2$ such that $C=\{1,2,3\}$. Also if $i\sim j$, then $x_{i\wedge j}\leq p-3$ such that $i,j\in C$ and $i<j$. By using the facts that edge number of $G$ is equal to $m$ and $t(G)=t(Kite_{p,2})$, we get the following equations,

\begin{equation}\sum_{i=1}^{3}x_{i}+d=m-\left(                                       \begin{array}{c}
                                         p-1 \\
                                         2 \\
                                       \end{array}
                                     \right)= \left(                                       \begin{array}{c}
                                         p \\
                                         2 \\
                                       \end{array}
                                     \right)+2-\left(
                                       \begin{array}{c}
                                         p-1 \\
                                         2 \\
                                       \end{array}
                                     \right)=p+1\end{equation}

                                     \begin{eqnarray}\sum_{i=1}^{3}\left(                                       \begin{array}{c}
                                         x_{i} \\
                                         2 \\
                                       \end{array}
                                     \right)+\sum_{i\sim j}x_{i\wedge j}+\alpha &=& t(Kite_{p,2})-\left(
                                       \begin{array}{c}
                                         p-1 \\
                                         3 \\
                                       \end{array}
                                     \right) \notag \\ &=& \left(                                       \begin{array}{c}
                                         p \\
                                         3 \\
                                       \end{array}
                                     \right)-\left(                                       \begin{array}{c}
                                         p-1 \\
                                         3 \\
                                       \end{array}
                                     \right) \notag \\ &=& \frac{p^{2}-3p+2}{2}\end{eqnarray}

In this case $0\leq d\leq3$.

\textit{Subcase 1}

Let $d=0$. Then, $\sum_{i\sim j}x_{i\wedge j}+\alpha=0$ and $\sum_{i=1}^{3}x_{i}=p+1$. So, we get \begin{equation*}\sum_{i=1}^{3}\left(                                       \begin{array}{c}
                                         x_{i} \\
                                         2 \\
                                       \end{array}
                                     \right)+\sum_{i\sim j}x_{i\wedge j}+\alpha=\sum_{i=1}^{3}\left(                                       \begin{array}{c}
                                         x_{i} \\
                                         2 \\
                                       \end{array}
                                     \right)\end{equation*} Since $G$ does not contain an isolated vertex, $x_{i}>0$ for all $i\in C$.Thus, we have

                                     \begin{eqnarray*} \sum_{i=1}^{3}\left(                                       \begin{array}{c}
                                         x_{i} \\
                                         2 \\
                                       \end{array}
                                     \right) &=& \left(                                       \begin{array}{c}
                                         x_{1} \\
                                         2 \\
                                       \end{array}
                                     \right)+\left(                                       \begin{array}{c}
                                         x_{2} \\
                                         2 \\
                                       \end{array}
                                     \right)+\left(                                       \begin{array}{c}
                                         x_{3} \\
                                         2 \\
                                       \end{array}
                                     \right)\notag \\ &<& \left(                                       \begin{array}{c}
                                         x_{1}+x_{2}+x_{3}-2 \\
                                         2 \\
                                       \end{array}
                                     \right) \notag \\ &=& \left(                                       \begin{array}{c}
                                         p-1 \\
                                         2 \\
                                       \end{array}
                                     \right)\notag \\ &=& \frac{p^{2}-3p+2}{2}\end{eqnarray*}

But this result contradicts with (50).
\textit{Subcase 2}

Let $d=1$. Then, $\alpha=0$ and $\sum_{i=1}^{3}x_{i}=p$. We may call the adjacent vertices in $C$ with $1$ and $2$. So, we get \begin{equation*}\sum_{i=1}^{3}\left(                                       \begin{array}{c}
                                         x_{i} \\
                                         2 \\
                                       \end{array}
                                     \right)+\sum_{i\sim j}x_{i\wedge j}+\alpha=\sum_{i=1}^{3}\left(                                       \begin{array}{c}
                                         x_{i} \\
                                         2 \\
                                       \end{array}
                                     \right)+x_{1\wedge2}\end{equation*}

Since $G$ does not contain any isolated vertex, $x_{3}>0$. If $x_{1}=0$ (or $x_{2}=0$), then $x_{2}+x_{3}=p$ and $\sum_{i=1}^{3}\left(                                       \begin{array}{c}
                                         x_{i} \\
                                         2 \\
                                       \end{array}
                                     \right)=\left(                                       \begin{array}{c}
                                         x_{2} \\
                                         2 \\
                                       \end{array}
                                     \right)+\left(                                       \begin{array}{c}
                                         x_{3} \\
                                         2 \\
                                       \end{array}
                                     \right)$. Since $p\geq4$ and $\forall i\in C$ $x_{i}\leq p-2$,

                                     \begin{eqnarray} \sum_{i=1}^{3}\left(                                       \begin{array}{c}
                                         x_{i} \\
                                         2 \\
                                       \end{array}
                                     \right)+x_{1\wedge2}&=&\left(                                       \begin{array}{c}
                                         x_{2} \\
                                         2 \\
                                       \end{array}
                                     \right)+\left(                                       \begin{array}{c}
                                         x_{3} \\
                                         2 \\
                                       \end{array}
                                     \right) \notag \\  &\leq& max{\left(                                       \begin{array}{c}
                                         x_{2} \\
                                         2 \\
                                       \end{array}
                                     \right)+\left(                                       \begin{array}{c}
                                         x_{3} \\
                                         2 \\
                                       \end{array}
                                     \right)} \notag \\ &\leq& \left(                                       \begin{array}{c}
                                         p-2 \\
                                         2 \\
                                       \end{array}
                                     \right)+\left(                                       \begin{array}{c}
                                         2 \\
                                         2 \\
                                       \end{array}
                                     \right) \notag \\ &=&\frac{p^{2}-5p+8}{2}<\frac{p^{2}-3p+2}{2}\end{eqnarray}

 If $x_{1}$,$x_{2}>0$, then by using  $x_{i}\leq p-2$ and $x_{i\wedge j}\leq p-3$ such that $i\sim j$,

 \begin{eqnarray} \sum_{i=1}^{3}\left(                                       \begin{array}{c}
                                         x_{i} \\
                                         2 \\
                                       \end{array}
                                     \right)+x_{1\wedge2} &\leq& max\{\sum_{i=1}^{3}\left(                                       \begin{array}{c}
                                         x_{i} \\
                                         2 \\
                                       \end{array}
                                     \right)\}+p-3 \notag \\ &\leq& \left(                                       \begin{array}{c}
                                         p-2 \\
                                         2 \\
                                       \end{array}
                                     \right)+p-3 \notag \\ &=&\frac{p^{2}-3p}{2}<\frac{p^{2}-3p+2}{2}\end{eqnarray}

By (51) and (52), we have contradiction with (50).

\textit{Subcase 3}

Let $d=2$. Then, $\alpha=0$ and $\sum_{i=1}^{3}x_{i}=p-1$. We may call the pair of adjacent vertices in $C$,respectively, with (1,2) and (2,3). Hence, we get \begin{equation}\sum_{i=1}^{3}\left(                                       \begin{array}{c}
                                         x_{i} \\
                                         2 \\
                                       \end{array}
                                     \right)+\sum_{i\sim j}x_{i\wedge j}+\alpha=\sum_{i=1}^{3}\left(                                       \begin{array}{c}
                                         x_{i} \\
                                         2 \\
                                       \end{array}
                                     \right)+x_{1\wedge2}+x_{2\wedge3}\end{equation}

If $x_{1}=0$ (or $x_{3}=0$), then $x_{2}+x_{3}=p-1$ and \begin{equation*}\sum_{i=1}^{3}\left(                                       \begin{array}{c}
                                         x_{i} \\
                                         2 \\
                                       \end{array}
                                     \right)+x_{1\wedge2}+x_{2\wedge3}=\left(                                       \begin{array}{c}
                                         x_{2} \\
                                         2 \\
                                       \end{array}
                                     \right)+\left(                                       \begin{array}{c}
                                         x_{3} \\
                                         2 \\
                                       \end{array}
                                     \right)+x_{2\wedge3}\end{equation*} Since $x_{i}\leq p-2$ and $x_{i\wedge j}\leq p-3$, we get

                                     \begin{eqnarray}\left(                                       \begin{array}{c}
                                         x_{2} \\
                                         2 \\
                                       \end{array}
                                     \right)+\left(                                       \begin{array}{c}
                                         x_{3} \\
                                         2 \\
                                       \end{array}
                                     \right)+x_{2\wedge3} &\leq& max\{\left(                                       \begin{array}{c}
                                         x_{2} \\
                                         2 \\
                                       \end{array}
                                     \right)+\left(                                       \begin{array}{c}
                                         x_{3} \\
                                         2 \\
                                       \end{array}
                                     \right)\}+p-3 \notag \\ &\leq& \left(                                       \begin{array}{c}
                                         p-2 \\
                                         2 \\
                                       \end{array}
                                     \right)+p-3 \notag \\ &=& \frac{p^{2}-3p}{2}<\frac{p^{2}-3p+2}{2}\end{eqnarray}

 If $x_{2}=0$, then $x_{1}+x_{3}=p-1$ and

  \begin{eqnarray}\left(                                       \begin{array}{c}
                                         x_{2} \\
                                         2 \\
                                       \end{array}
                                     \right)+\left(                                       \begin{array}{c}
                                         x_{3} \\
                                         2 \\
                                       \end{array}
                                     \right)+x_{2\wedge3} &\leq& max\{\left(                                       \begin{array}{c}
                                         x_{2} \\
                                         2 \\
                                       \end{array}
                                     \right)+\left(                                       \begin{array}{c}
                                         x_{3} \\
                                         2 \\
                                       \end{array}
                                     \right)\}\notag \\ &\leq& \left(                                       \begin{array}{c}
                                         p-2 \\
                                         2 \\
                                       \end{array}
                                     \right)\notag \\ &<&\left(                                       \begin{array}{c}
                                         p-1 \\
                                         2 \\
                                       \end{array}
                                     \right)\notag \\ &=&\frac{p^{2}-3p+2}{2}\end{eqnarray}

 If $x_{i}>0$ for all $i$, then,

 \begin{eqnarray} \sum_{i=1}^{3}\left(                                       \begin{array}{c}
                                         x_{i} \\
                                         2 \\
                                       \end{array}
                                     \right)+x_{1\wedge2}+x_{2\wedge3} &\leq& \sum_{i=1}^{3}\left(                                       \begin{array}{c}
                                         x_{i} \\
                                         2 \\
                                       \end{array}
                                     \right)+x_{1}+x_{2} \notag \\ &<& \left(                                       \begin{array}{c}
                                         x_{1}+x_{2}+x_{3} \\
                                         2 \\
                                       \end{array}
                                     \right)\notag \\ &=&\left(                                       \begin{array}{c}
                                        p-1 \\
                                         2 \\
                                       \end{array}
                                     \right)\notag \\ &=&\frac{p^{2}-3p+2}{2}\end{eqnarray}

By (53),(54) and (55), we have contradiction with (50).

\textit{Subcase 4}

Let $d=3$. Then, we have $\alpha=1$ and $\sum_{i=1}^{3}x_{i}=p-2$. Here, all of the vertices of $C$ are adjacent to each other. Hence, we get \begin{equation*}\sum_{i=1}^{3}\left(                                       \begin{array}{c}
                                         x_{i} \\
                                         2 \\
                                       \end{array}
                                     \right)+\sum_{i\sim j}x_{i\wedge j}+\alpha=\sum_{i=1}^{3}\left(                                       \begin{array}{c}
                                         x_{i} \\
                                         2 \\
                                       \end{array}
                                     \right)+x_{1\wedge2}+x_{2\wedge3}+x_{1\wedge3}+1\end{equation*}

Since $\sum_{i=1}^{3}x_{i}=p-2$, $\exists i \in C$, $x_{i}\neq0$. Without loss of generality, if  $x_{1}=x_{2}=0$ then $x_{3}=p-3$. Since $x_{i}\leq p-2$, we get

\begin{eqnarray}\sum_{i=1}^{3}\left(                                       \begin{array}{c}
                                         x_{i} \\
                                         2 \\
                                       \end{array}
                                     \right)+x_{1\wedge2}+x_{2\wedge3}+x_{1\wedge3}+1 &=& \left(                                       \begin{array}{c}
                                         x_{3} \\
                                         2 \\
                                       \end{array}
                                     \right)+1 \notag \\ &\leq&\left(                                       \begin{array}{c}
                                         p-2 \\
                                         2 \\
                                       \end{array}
                                     \right)+1\notag \\ &=&\frac{p^{2}-5p+8}{2}\notag \\ &<& \frac{p^{2}-3p+2}{2}\end{eqnarray}

Without loss of generality, if $x_{1}=0$ , then $x_{2}+x_{3}=p-2$ and \begin{equation*}\sum_{i=1}^{3}\left(                                       \begin{array}{c}
                                         x_{i} \\
                                         2 \\
                                       \end{array}
                                     \right)+x_{1\wedge2}+x_{2\wedge3}+x_{1\wedge3}+1=\left(                                       \begin{array}{c}
                                         x_{2} \\
                                         2 \\
                                       \end{array}
                                     \right)+\left(                                       \begin{array}{c}
                                         x_{3} \\
                                         2 \\
                                       \end{array}
                                     \right)+x_{2\wedge3}+1\end{equation*} Since $x_{i}\leq p-2$ and $x_{i\wedge j}\leq p-3$, we get

                                     \begin{eqnarray}\left(                                       \begin{array}{c}
                                         x_{2} \\
                                         2 \\
                                       \end{array}
                                     \right)+\left(                                       \begin{array}{c}
                                         x_{3} \\
                                         2 \\
                                       \end{array}
                                     \right)+x_{2\wedge3}+1 &\leq& max\{\left(                                       \begin{array}{c}
                                         x_{2} \\
                                         2 \\
                                       \end{array}
                                     \right)+\left(                                       \begin{array}{c}
                                         x_{3} \\
                                         2 \\
                                       \end{array}
                                     \right)\}+p-3+1 \notag \\ &\leq& \left(                                       \begin{array}{c}
                                         p-2 \\
                                         2 \\
                                       \end{array}
                                     \right)+p-3+1 \notag \\ &=& \frac{p^{2}-3p}{2}<\frac{p^{2}-3p+2}{2}\end{eqnarray}

 If $x_{i}>0$ for all $i$, then we get

 \begin{eqnarray} \sum_{i=1}^{3}\left(                                       \begin{array}{c}
                                         x_{i} \\
                                         2 \\
                                       \end{array}
                                     \right)+x_{1\wedge2}+x_{2\wedge3}+x_{1\wedge3}&\leq& \sum_{i=1}^{3}\left(                                       \begin{array}{c}
                                         x_{i} \\
                                         2 \\
                                       \end{array}
                                     \right)+\sum_{i=1}^{3} x_{i}+1 \notag \\ &<& \left(                                       \begin{array}{c}
                                         x_{1}+x_{2}+x_{3}+1 \\
                                         2 \\
                                       \end{array}
                                     \right) \notag \\ &=&\left(                                       \begin{array}{c}
                                        p-1 \\
                                         2 \\
                                       \end{array}
                                     \right)\notag \\ &=&\frac{p^{2}-3p+2}{2}\end{eqnarray}

By (56),(57) and (58), we have contradiction with (50).

Again, in this case, we have seen the same result $t(G)<t(Kite_{p,2})$ and got the same contradiction. Hence, we can write $w(G)\neq n-3$. From Case 1 to Case3, we can conclude that $w(G)=n-2=p$. So, $G$ must contain at least one clique with size $p$ and this is a maximum clique of $G$. So, there are two vertices out of a maximum clique of $G$. Let us label these two vertices with $1$ and $2$ and demonstrate this case in the following figure.

\begin{figure}[htbp] {}
\centering
\includegraphics[width=2.3cm,angle=0,height=2.3cm]{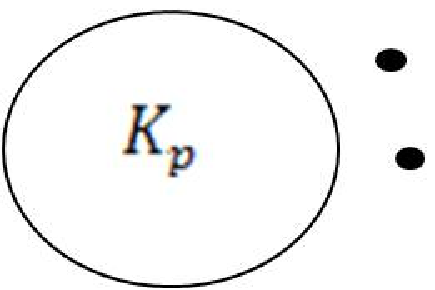}
\caption{}
\end{figure}

We denote the degrees of the vertices $1$ and $2$ respectively with $d_{1}$ and $d_{2}$. Then $d_{1}+d_{2}=2$. Since $G$ does not contain any isolated vertex, $d_{1}=d_{2}=1$. Thus, $G$ must be isomorphic to the one of the following three graphs.

\begin{figure}[htbp] {}
\centering
\includegraphics[width=2.3cm,angle=0,height=2.3cm]{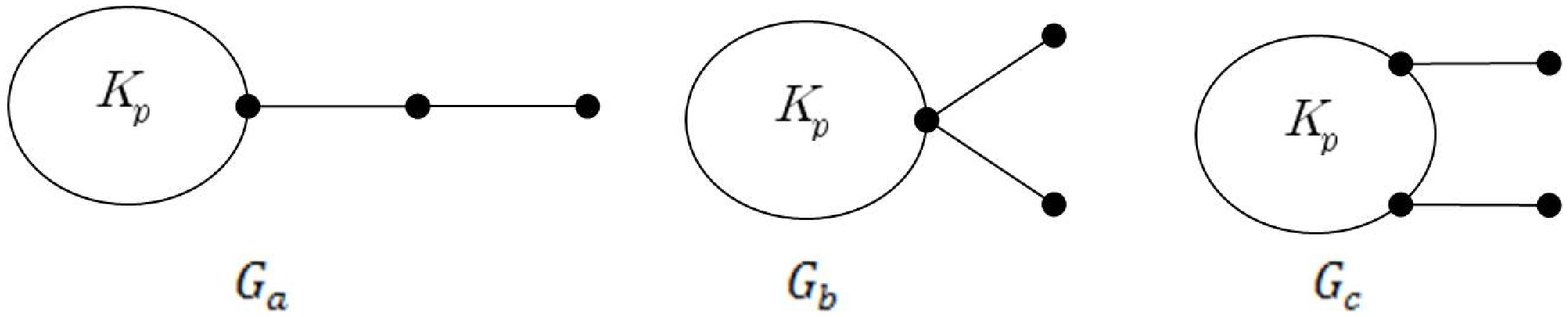}
\caption{}
\end{figure}

\bigskip

It is shown that $G_{b}$ is $DAS$ in \cite{4}. So, let us find the characteristic polynomial of the graph $G_{c}$. In this step, we use Lemma 2.1 again.

\begin{eqnarray*}
P_{A(G_{c})}(\lambda ) &=&\lambda P_{A(Kite_{p,1})}(\lambda
)-P_{A(Kite_{p-1,1})})(\lambda ) \\
&=&\lambda [(\lambda+1)^{p-2}(\lambda^{3}-(p-2)\lambda^{2}-\lambda p+p-2)] \\ && - [(\lambda+1)^{p-3}(\lambda^{3}-(p-3)\lambda^{2}-(p-1)\lambda+p-3)]\\
&=&(\lambda+1)^{p-3}[\lambda(\lambda^{3}-(p-2)\lambda^{2}-\lambda p+p-2)-(\lambda^{3}-(p-3)\lambda^{2}-(p-1)\lambda+p-3)]\\
&=&(\lambda+1)^{p-3}[\lambda^{4}+\lambda^{3}-3\lambda^{2}-3\lambda+3-p(1-2\lambda+\lambda^{3})]\\
&=&(\lambda+1)^{p-3}f(\lambda)
\end{eqnarray*}%

Hence, we can see that $G$ is not cospectral with $G_{c}$. So, $G$ is not isomorphic to $G_{c}$. Accordingly,   $G\cong G_{a}\cong Kite_{p,2}$
\end{proof}

In the final of the paper, we give some problems below.

\begin{conjecture} For $q>2$, Kite$_{p,q}$ graphs are DAS.
\end{conjecture}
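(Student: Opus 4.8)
The plan is to follow the architecture of the $q=2$ proof (Theorem 4.3) and push the bookkeeping through for arbitrary $q$. Let $G$ be cospectral with $Kite_{p,q}$. By Lemmas 2.3 and 2.4, $G$ has $n=p+q$ vertices, $m=\binom{p}{2}+q$ edges, and $t(G)=t(Kite_{p,q})=\binom{p}{3}$ triangles; moreover $G$ shares every closed-walk count $N_G(i)$ with $Kite_{p,q}$. The argument reduces to two tasks: first pin down $w(G)=p$, then show that the unique graph on these parameters with the correct spectrum is the kite itself. For the degenerate small cases ($p\le 3$, where $Kite_{p,q}$ is a path or a lollipop already known to be \emph{DAS}) one cites \cite{3,4} as in the paper.

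The first task localizes the clique number. The clique-number lemma of Section 4 already gives $w(G)\ge p-2q+1$. For the upper bound, a maximum clique of size $p+k$ would force $\binom{p+k}{2}\le m=\binom{p}{2}+q$, i.e. $k(2p+k-1)/2\le q$; for $p>q$ this excludes $k\ge1$, so $w(G)\le p$ (the boundary regime $p\le q$ must be absorbed into the uniform inequality below rather than checked finitely, since it grows with $q$). It then remains to eliminate the intermediate values $w(G)=p-s$ for $1\le s\le 2q-1$. Fixing a maximum clique $K_{p-s}$ leaves $q+s$ outside vertices, and, writing $x_i$ for the number of clique-neighbours of outside vertex $i$, $x_{i\wedge j}$ for common clique-neighbours of an adjacent outside pair, $d$ for the number of outside edges and $\alpha$ for the number of outside triangles, the two identities $\sum_i x_i+d=\binom{p}{2}+q-\binom{p-s}{2}$ and $\sum_i\binom{x_i}{2}+\sum_{i\sim j}x_{i\wedge j}+\alpha=\binom{p}{3}-\binom{p-s}{3}$ must hold, exactly as (3)--(4). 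Under the clique-number caps $x_i\le p-s-1$ and $x_{i\wedge j}\le p-s-2$, the plan is to prove a single extremal inequality showing $\max\bigl[\sum_i\binom{x_i}{2}+\sum_{i\sim j}x_{i\wedge j}+\alpha\bigr]<\binom{p}{3}-\binom{p-s}{3}$ for all $1\le s\le 2q-1$ and all $p$ past a threshold. This yields $t(G)<\binom{p}{3}$, a contradiction, forcing $w(G)=p$.

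With $w(G)=p$ the structure becomes rigid: $G$ is $K_p$ together with $q$ outside vertices joined by exactly $q$ extra edges, and since $t(G)=\binom{p}{3}$ already equals the triangle count of $K_p$, no triangle meets the outside. Hence each outside vertex has at most one clique-neighbour and the outside graph is triangle-free. After determining the multiplicity of $0$ in $\mathrm{spec}(Kite_{p,q})$ (which bounds the number of isolated vertices of $G$ and, once shown to vanish, gives minimum outside degree $\ge1$) and excluding disconnected mates---any component carrying the Perron value localized by Theorem 2.5 must contain the $K_p$, leaving only $q$ edges for everything else---I would invoke the higher moments. Matching $\mathrm{tr}(A^4)$ fixes $\sum_i d_i^2$ together with the number of $4$-cycles; since the kite has exactly the $3\binom{p}{4}$ four-cycles of its clique, $G$ admits no outside $C_4$. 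Combining this with $\mathrm{tr}(A^5)$ (which controls $C_5$ and the $P_4$-counts) should force the outside structure to be a tree attached to the clique at a single vertex, and then a path, so that $G\cong Kite_{p,q}$; Theorem 3.1 confirms no other kite can intervene.

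The main obstacle is twofold. The elimination of $w(G)=p-s$ no longer splits into three cases but into a family growing with $q$, each value $s$ carrying its own breakdown over the outside-edge budget $d$; worse, my leading-order estimate shows that $\max[\sum\binom{x_i}{2}+\cdots]$ and the target $\binom{p}{3}-\binom{p-s}{3}$ agree to order $sp^2/2$, so the decision rests on lower-order terms, exactly as in the delicate linear-coefficient comparisons of the $q=2$ case. The crux is therefore a \emph{uniform} convexity/extremality bound on the number of triangles through the outside vertices under fixed edge budget and clique-number caps, valid for all $q$ and all large $p$, replacing the ad hoc \textit{Mathematica}-assisted comparisons. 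The second obstacle is the final rigidity step: ruling out every non-path attachment (stars, other trees, pendant cycles, stray components) genuinely needs moments beyond $\mathrm{tr}(A^4)$, and obtaining closed forms for $\mathrm{tr}(A^k)$ up to $k\approx q$ is where the $q=2$ template offers the least guidance and where the proof is hardest.
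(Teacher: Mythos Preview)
The statement you are addressing is listed in the paper as a \emph{conjecture}, not as a proved result; the paper offers no proof and explicitly leaves the $q>2$ case open (together with the related Problems immediately following it). There is therefore no argument in the paper to compare your proposal against.

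Your proposal is itself not a proof but a programme, and you correctly flag the two essential gaps yourself. The first---replacing the eleven-plus-seven-plus-four subcase enumeration of the $q=2$ argument by a single uniform extremal inequality over all $1\le s\le 2q-1$ and all admissible $d$---is a genuine difficulty: the $q=2$ proof proceeds by explicit numerical comparison in each subcase, several steps were verified by \textit{Mathematica}, and nothing in the paper suggests these sharpen to a closed-form bound; your observation that the target and the extremal quantity agree to leading order $sp^2/2$ is exactly why the paper's method does not visibly scale. The second gap is more serious. Even after $w(G)=p$ is secured, for $q=2$ the paper finishes by listing the three possible attachment graphs and computing one characteristic polynomial by hand; for general $q$ the family of candidate graphs (a $K_p$ with $q$ outside vertices, $q$ extra edges, no extra triangles) is infinite in $q$, and your appeal to $\mathrm{tr}(A^k)$ for $k$ up to order $q$ is a hope, not a method---you would need either closed forms for these traces on every candidate or a structural argument bypassing them. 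In short, your outline is a reasonable line of attack on an open problem, but it is not a proof, and the paper does not claim one.
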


\begin{problem} For a given simple and undirected graph G, let G be a DAS graph and contains a pendant vertex. Let H be the graph obtained from G by adding one edge to the pendant vertex of G. Then, is H  DAS?
\end{problem}

\begin{problem} Let G and H be graphs as in Problem 4.4. Which conditions must G satisfy to obtain the result that H is DAS?
\end{problem}


\end{document}